\newcommand{\cellwidth}{0.8cm}
\newcommand{\celldist}{0.8cm}
\newcommand{\resolvedcellwidth}{0.6cm}
\newcommand{\resolvedcelldist}{0.6cm}
\newcommand{\ranktwocolor}{gray!20}
\newcommand{\rankonecolor}{gray!20}
\newcommand{\bothoddcolor}{gray!50}
\newcommand{\byhandcolor}{red}
\newcommand{\bycomputercolor}{blue}
\newcommand{\ENSUREGAP}{\vspace{0.2cm}}
\tikzstyle{top}=[draw, rectangle,  minimum height=\cellwidth, minimum width=\cellwidth, fill=gray!10, , text height = 0.25cm]
\tikzstyle{left}=[draw, rectangle, minimum height=\cellwidth, minimum width=\cellwidth, fill=gray!10, ]
\tikzstyle{cell}=[rectangle,draw, minimum height=\cellwidth,minimum width=\cellwidth,text centered,text width=0.5cm]
\tikzstyle{invisible}=[rectangle, minimum height=\cellwidth,minimum width=\cellwidth,text centered,text width=0.5cm]
\tikzstyle{resolvedcell}=[rectangle,draw, minimum height=\resolvedcellwidth,minimum width=\resolvedcellwidth,text centered,text width=0.03cm]
\newcommand{\glabels}[1]
{
    \begin{scope}
	    \foreach \x in {1,...,#1} 
	    {
	    	\node[top] at (\x,0) {$h_\x$};
	    }
    \end{scope}
}
\newcommand{\hlabels}[1]
{
    \begin{scope}
	    \foreach \x in {1,...,#1} 
	    {
	    	\node[top] at (0,-\x) {$g_\x$};
	    }
    \end{scope}
}
\newcommand{\recblocks}[2]
{
    \begin{scope}
    \foreach \x in {1,...,#1} 
	{
			\foreach \y in {1,...,#2} 
			{
				\node[cell]  at (\x,-\y) {};
			}
	}
    \end{scope}
}
\newcommand{\resolvedblocks}[2]
{

    \begin{scope}
    \foreach \x in {1,...,#1} 
	{
			\foreach \y in {1,...,#2} 
			{
				\node[resolvedcell]  at (\x,-\y) {};
			}
	}
    \end{scope}
    	\foreach \x in {1,...,#1} 
    	{
    		\node  at (\x,0) {$\x$};
    		\node[resolvedcell, fill=\rankonecolor]  at (\x,-1) {};
    		\node[resolvedcell, fill=\ranktwocolor]  at (\x,-2) {};
    		    		    			
    	}
    	\foreach \y in {1,...,#2} 
    	{
    		\node  at (0,-\y) {$\y$};	
    		\node[resolvedcell, fill=\rankonecolor]  at (1,-\y) {};
    		\node[resolvedcell, fill=\ranktwocolor]  at (2,-\y) {};
    	}
    	
}
\theoremstyle{plain}
\newtheorem{theorem}{Theorem}
\newtheorem{lemma}[theorem]{Lemma}
\newtheorem{corollary}[theorem]{Corollary}
\theoremstyle{definition}
\newtheorem{definition}[theorem]{Definition}
\newtheorem{conjecture}{Conjecture}
\definecolor{lightblue}{rgb}{0.5,0.5,1.0}
\definecolor{darkred}{rgb}{0.5,0,0}
\definecolor{darkgreen}{rgb}{0,0.5,0}
\definecolor{darkblue}{rgb}{0,0,0.5}
\DeclareMathOperator{\supp}{supp}
\DeclareMathOperator{\cyc}{cyc}
\DeclareMathOperator{\notorsion}{tor-free}
\DeclareMathOperator{\assocgroup}{G_{ass}}
\DeclareMathOperator{\assocgraph}{K_{ass}}
\DeclareMathOperator{\length}{length}
\DeclareMathOperator{\match}{match}
\DeclareMathOperator{\lex}{lex}
\newcommand{\BigO}{O}
\title{On Zero Divisors with Small Support in Group Rings of Torsion-Free Groups}
\author{Pascal Schweitzer\thanks{This
work is supported 
by the National Research Fund of Luxembourg, and co-funded under the Marie Curie 
Actions of the European Commission (FP7-COFUND). } \\[2ex]
Research School of Computer Science\\
The Australian National University\\ 
Canberra, ACT 0200, Australia\\
{Pascal.Schweitzer@anu.edu.au} \\[1ex]
}
\begin{document}

\maketitle

\begin{abstract}
Kaplanski's Zero Divisor Conjecture envisions that for a torsion-free group~$G$ and an integral domain~$R$, the group ring $R[G]$ does not contain non-trivial zero divisors. We define the length of an element~$\alpha\in R[G]$ as the minimal non-negative integer~$k$ for which there are ring elements~$r_1,\ldots,r_k\in R$ and group elements~$g_1,\ldots,g_k\in G$ such that~$\alpha = r_1 g_1+\ldots+r_kg_k$. We investigate the conjecture when~$R$ is the field of rational numbers.
By a reduction to the finite field with two elements, we show that if~$\alpha \beta = 0$ for non-trivial elements in the group ring of a torsion-free group over the rationals, then the lengths of~$\alpha$ and~$\beta$ cannot be among certain combinations. More precisely, we show for various pairs of integers~$(i,j)$ that if one of the lengths is at most~$i$ then the other length must exceed~$j$.
Using combinatorial arguments we show this for the pairs~$(3,6)$ and~$(4,4)$. With a computer-assisted approach we strengthen this to show the statement holds for the pairs~$(3,16)$ and~$(4,7)$. As part of our method, we describe a combinatorial structure, which we call matched rectangles, and show that for these a canonical labeling can be computed in quadratic time. Each matched rectangle gives rise to a presentation of a group. These associated groups are universal in the sense that there is no counterexample to the conjecture among them if and only if the conjecture is true over the rationals.
\end{abstract}
{\textbf{Keywords:}  group rings, torsion-free groups, Kaplanski's Zero Divisor Conjecture, exhaustive isomorph-free generation}

\section{Introduction}
The study of group rings was initiated in~1837 by Hamilton in order to study first the complex numbers and later the quaternions (see~\cite{Milies}). 
Given a ring~$R$ and a group~$G$, the group ring~$R[G]$ of~$G$ over~$R$ is the ring whose elements are the linear combinations of elements in~$G$ with coefficients in~$R$. The multiplication in~$R[G]$ is the linear extension of the multiplication in~$G$.
Recall that if for $\alpha,\beta \in R[G]\setminus \{0\}$ we have $\alpha  \beta = 0$ then~$\alpha$ as well as~$\beta$ are called \emph{non-trivial zero divisors}.
Around 1940 Kaplanski asked whether for an integral domain~$R$ (i.e., a commutative ring without non-trivial zero divisors) the group ring~$R[G]$ of a torsion-free group~$G$ over~$R$ can have non-trivial zero divisors (see~\cite{Passman}).

\begin{conjecture}[Kaplanski's Zero Divisor Conjecture]
If~$G$ is a torsion-free group and~$R$ an integral domain, then the group ring $R[G]$ does not contain non-trivial zero divisors.
\end{conjecture}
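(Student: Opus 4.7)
The statement to prove is Kaplanski's Zero Divisor Conjecture, which has been open for over eighty years, so I would not expect to settle it outright; the realistic goal is a reduction together with partial results that either dispatch small cases or sharply localize the difficulty. First I would reduce the coefficient ring. If $\alpha\beta = 0$ in $R[G]$ with $R$ an integral domain, passing to the field of fractions and clearing denominators reduces to $\mathbb{Z}[G]$, and reducing modulo a suitable prime $p$ at which the coefficients survive yields a nontrivial zero-divisor equation in $\mathbb{F}_p[G]$. It therefore suffices to refute counterexamples over $\mathbb{F}_p$, and $p = 2$ is the natural first target since coefficients collapse to $\{0,1\}$ and the question becomes purely combinatorial: which pairs of finite supports $S_\alpha, S_\beta \subset G$ can satisfy $\alpha\beta = 0$ in a torsion-free group?

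Next I would parametrize by the lengths $|S_\alpha| = i$ and $|S_\beta| = j$. Over $\mathbb{F}_2$ the equation $\alpha\beta = 0$ says that every element of $G$ is hit an even number of times as a product $g_ah_b$ with $g_a \in S_\alpha$ and $h_b \in S_\beta$. For fixed $i, j$ this is a perfect-matching-like condition on the $i \times j$ grid of index pairs, where two cells must be identified whenever their products coincide. From each such combinatorial pattern one reads off a system of equations $g_ah_b = g_{a'}h_{b'}$, which presents a group $\Gamma_{i,j}$ by generators and relations. Any counterexample with these supports factors through some such $\Gamma_{i,j}$, so refuting the $(i,j)$ case reduces to showing that each $\Gamma_{i,j}$ either has torsion or forces a degeneration (a collapse of supports, a vanishing coefficient, or $\alpha$ or $\beta$ becoming trivial).

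The execution then splits into enumeration and verification. For small $(i,j)$ one enumerates match patterns on the grid up to symmetry and, for each surviving pattern, inspects the associated group. Pairs such as $(3,6)$ or $(4,4)$ should be within reach of hand combinatorics, because the number of distinct patterns is manageable and the induced relations are short enough to expose torsion or trigger an explicit collapse. For larger pairs one needs an isomorph-free generation scheme — equivalently, an efficient canonical labeling for the combinatorial objects (the ``matched rectangles'' the abstract alludes to) — together with algorithmic torsion detection, for instance by searching for short relators that force a finite-order element or by mapping $\Gamma_{i,j}$ into a known finite or orderable group.

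The main obstacle is unavoidable and well known: detecting torsion in a finitely presented group is undecidable in general, so no uniform mechanical procedure can dispatch every case, and one must exploit the rigid structure of presentations arising from matched rectangles together with heuristics that happen to succeed. Combined with the fast growth in the number of match patterns — which makes effective canonicalization essential rather than merely convenient — this caps what any such approach can prove to pairs $(i,j)$ with $\min(i,j)$ small. Accordingly, I would expect to obtain statements of the form ``no zero divisor with lengths $(i,j)$ exists'' for specific small pairs, matching exactly the partial results advertised in the abstract, rather than the full conjecture.
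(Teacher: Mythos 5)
This statement is Kaplanski's Zero Divisor Conjecture itself, which the paper does not prove (it remains open); the paper only establishes partial results for small length combinations, and you correctly recognize this. Your proposed route — reduction from an arbitrary integral domain to $\mathbb{F}_2$ by clearing denominators and projecting modulo a prime, encoding the cancellation pattern as a matching on the $i\times j$ grid, passing to the finitely presented group associated to each pattern, and combining hand arguments for pairs like $(3,6)$ and $(4,4)$ with canonical-labeling-based isomorph-free enumeration for larger pairs — is essentially identical to the paper's own approach, including its acknowledgment that undecidability of torsion-freeness caps what the method can achieve.
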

Passman~\cite{Passman} noted 37 years later that ``with the possible exception of semi-simplicity questions, this is by
far the most challenging and least approachable problem in the whole field
of group rings of infinite groups''.
Over the years, the conjecture has emerged as a most elementary yet fundamental question in algebra. Progress concerning the conjecture has been made in various directions but the conjecture itself remains open. 

The conjecture has been shown for various types of torsion-free groups. First known for right orderable groups (see~\cite{MR1709960}), Lewin~\cite{Lewin} showed the conjecture holds for amalgamated free products when the group ring of the subgroup over which the amalgam is formed satisfies the Ore condition. Using this, Formanek~\cite{Formanek} showed the conjecture for supersolvable groups.  Subsequent extension by Brown~\cite{Brown} and Farkas and Snider~\cite{Farkas} led to polycyclic-by-finite groups. This was again extended by Snider~\cite{MR599330}. Finally Kropholler, Linnell, and Moody~\cite{MR964842} use~$K$-theory to resolve the conjecture for elementary amenable groups. In other directions Lewin and Lewin~\cite{MR0485972} showed the conjecture for one-relator groups, Lazard~\cite{MR0209286} treated congruence subgroups (see also~\cite{MR2279234}) and Delzant~\cite{MR1440952} certain hyperbolic groups. 

For the case where~$R$ is isomorphic to the integers or to the field~$\mathbb{F}_p$ that has~$p$ elements for some prime~$p$, Lichtman~\cite{MR835867} gave a reformulation of the conjecture as group theoretic problem dealing solely with subgroups of a free group.
Linnell~\cite{MR1709960}, also surveying the status of the conjecture, considered an analytic variant. Subsequently, Elek~\cite{GaborElek} showed this analytic variant to be equivalent to the original conjecture. Ivanov~\cite{Ivanov199913} has exhibited a connection between asphericity of specific topological spaces and the Zero Divisor Conjecture for which Leary~\cite{Leary2000362} gave a short variant of the proof. Along these lines recently, Kim showed asphericity of certain one-relator presentations~\cite{MR2391638}.

In this paper we pursue an algorithmic approach to the Zero Divisor Conjecture. To this end, we explain in the following section why it is reasonable to consider the problem over~$\mathbb{F}_2$, the field that contains exactly two elements. We describe how having zero divisors~$\alpha$ and~$\beta$ with~$\alpha \beta = 0$ in a group ring~$\mathbb{F}_2[G]$ gives rise to an associated torsion-free, finitely presented group, which also has zero divisors. 
For brevity, we call a torsion-free group, which has non-trivial zero divisors over~$\mathbb{F}_2$, a counterexample. The associated groups are universal among the counterexamples in the sense that every counterexample~$G$ contains a subgroup~$H'$, which is a counterexample and which is a factor group of a counterexample among the associated groups.
Similar considerations leading to a universal family of finitely presented groups have been performed independently by other researchers~\cite{dynkema,RomanMikhailov}.

As explained, it suffices for us to consider associated groups. There is a correspondence between these groups and a class of combinatorial objects, which we call matched rectangles. 

The length of an element~$\alpha\in R[G]$ is the minimal non-negative integer~$k$ for which there are ring elements~$r_1,\ldots,r_k\in R$ and group elements~$g_1,\ldots,g_k\in G$ such that~$\alpha = r_1 g_1+\ldots+r_kg_k$.
When the lengths of the zero divisors~$\alpha$ and~$\beta$ are bounded, there are only finitely many associated groups that arise from the construction mentioned above.
Using combinatorial arguments, we show for specific length bounds that none of these associated groups are counterexamples, and thus prove the following theorem.

\begin{theorem}\label{thm:by:hand}
Let~$G$ be a torsion-free group and let~$\alpha,\beta \in \mathbb{Q}[G]\setminus \{0\}$, with~$\length(\alpha) = n$ and~$\length(\beta) = m$, be non-zero elements of the group ring of~$G$ over~$\mathbb{Q}$.
If~$\alpha  \beta = 0$ then
\begin{enumerate}
\item~$n >2$\label{item1},
\item~$m >2$\label{item2},
\item~$n >3$ or~$m>6$\label{item3},
\item~$n >6$ or~$m>3$\label{item4}, and
\item~$n >4$ or~$m>4$\label{item5}.
\end{enumerate}
\end{theorem}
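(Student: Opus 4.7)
My approach proceeds in three stages. First I would invoke the reduction from $\mathbb{Q}$ to $\mathbb{F}_2$ outlined in the paper's introduction: clearing denominators yields zero divisors in $\mathbb{Z}[G]$ of the same lengths; dividing out any power of $2$ that is common to all coefficients of either factor, we obtain zero divisors in $\mathbb{F}_2[G]$ whose lengths are at most the original $n$ and $m$. Consequently it suffices to prove each of the five non-existence statements over $\mathbb{F}_2$, where every element of length $k$ is simply a sum of $k$ distinct group elements.

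For items (\ref{item1}) and (\ref{item2}) I would argue directly from torsion-freeness without any structural machinery. A length-$1$ element $r_1 g_1$ is a unit, so it cannot annihilate a nonzero $\beta$. For a length-$2$ element, factor $r_1 g_1 + r_2 g_2 = r_1 g_1 (1 + c h)$ where $c = r_1^{-1} r_2$ and $h = g_1^{-1} g_2$. In a torsion-free group, $h$ has infinite order. If $(1 + c h)\beta = 0$ then $\beta = -c h \beta$, and iterating gives $\beta = (-c)^k h^k \beta$ for every $k \geq 0$. Hence the (finite) support of $\beta$ must be invariant under left multiplication by $h$, forcing it to contain an infinite orbit unless $\beta = 0$; contradiction.

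The heart of the proof is items (\ref{item3}), (\ref{item4}), (\ref{item5}). Here I would use the correspondence between $\mathbb{F}_2$-zero-divisor pairs of prescribed lengths and the combinatorial structure of matched rectangles promised in the introduction: a putative equation $\alpha \beta = 0$ with $\length(\alpha)=n$ and $\length(\beta)=m$ produces an $n \times m$ grid recording which products $g_i h_j$ collide in pairs (they must come in pairs in characteristic $2$), together with the resulting associated group presentation. The plan is to enumerate, up to the natural symmetries (independent row and column relabelings, plus the $\alpha\leftrightarrow\beta$ swap when $n=m$), all matched rectangles of the relevant sizes $3\times 6$, $6\times 3$, and $4\times 4$, and to rule each one out by exhibiting either (a) a relation in the associated group forcing torsion, contradicting the assumption on $G$, or (b) an algebraic identity among the $g_i$ and $h_j$ that collapses $\alpha$ or $\beta$ below the alleged length, contradicting minimality.

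The main obstacle is the bookkeeping in stage three: the number of matched rectangles in the relevant range, even after modding out by symmetry, is sizable, and for each one must produce a short ad hoc certificate of torsion or of length collapse. The rectangles with many ``trivial'' collisions are easy (the associated presentations simplify drastically), but the sparse, highly symmetric rectangles are likely to require a careful manipulation of the associated group relations, typically conjugating relators to expose a conjugate of a shorter element equal to its inverse, which forces an element of order $2$. Organizing these arguments so that they remain human-checkable, rather than letting the casework spiral into the territory that the later computer-assisted theorem handles, is the delicate part.
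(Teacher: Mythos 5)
Your stages 1 and 2 are correct and essentially identical to the paper's: the reduction to $\mathbb{F}_2$ is the paper's Lemma~\ref{lem:Q:to:FP}, and your support-orbit argument for length at most $2$ is a sound variant of the paper's proof of Theorem~\ref{thm:length:2} (the paper phrases it via a permutation $\phi$ with $h_i = g h_{\phi(i)}$ and a cycle of $\phi$; both work). A first, minor problem in stage 3 is that your list of sizes is incomplete: items (\ref{item3})--(\ref{item5}) also require ruling out $3\times 4$ and $4\times 3$ rectangles, which are not sub-cases of the $3\times 6$ or $4\times 4$ enumerations (the paper handles them separately in Corollary~\ref{cor:3:times:4}).

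The serious gap is your certificate scheme. You propose to kill each rectangle by exhibiting either (a) a relation forcing torsion or (b) an identity, derived from the defining relations, that collapses a support. But the configurations that actually arise include partially matched rectangles whose associated core group is presented exactly as (a quotient of) $\mathbb{Z}$, of $BS(1,1)\cong\mathbb{Z}^2$, of $BS(1,-1)$ (the Klein bottle group, via $g^2=h^{-2}$), or of $BS(1,-2)$; see Figures~\ref{fig:3:3:subcase:2}, \ref{fig:3:3:subcase:3}, \ref{fig:3:4:cycle:subcase:1}, \ref{fig:3:4:cycle:subcase:2} and~\ref{fig:3:4:cycle:subcase:4} in the paper. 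These presentations are consistent with a torsion-free group in which all generators are distinct, so no finite manipulation of the relators can produce torsion or a collapse; in particular your heuristic of ``conjugating relators to expose an element of order $2$'' cannot succeed there. What forces degeneracy in these cases is an external group-ring theorem: every quotient of a solvable Baumslag--Solitar group $BS(1,n)$ (and of $\mathbb{Z}$) has no non-trivial zero divisors over $\mathbb{F}_2$ --- the paper's Theorem~\ref{thm:bsg:groups}, proved in Section~\ref{sec:BS:groups} --- combined with the fact that a non-degenerate rectangle would hand such a group a zero-divisor pair. Your proposal never introduces this ingredient, nor the structural tools that make the enumeration humanly tractable (heredity of degeneracy under sub-rectangles, Lemma~\ref{lem:degeneretedness:antihereditary}; the cyclic-closure criterion, Lemma~\ref{lem:exhaustive:means:degenerate}; and the correspondence between non-degenerate $3\times m$ rectangles and triangle-free cubic graphs, which is what lets the paper dispose of $3\times 4$ instantly and reduce $3\times 6$ to the single graph $K_{3,3}$). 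Without the Baumslag--Solitar input the plan cannot be completed as stated, because for those configurations no certificate of your types (a) or (b) exists.
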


For greater length combinations, the arguments required to show that none of the associated groups are counterexamples become excessively tedious and amount to a large case distinction. This calls for a computer-assisted approach. Using the canonical construction path method by McKay~\cite{canonicalMcKay}, we design an algorithm that, for a fixed length combination~$(n,m)$, enumerates all minimal matched rectangles of dimensions at most~$(n,m)$, which correspond to counterexamples. 
We obtain the following strengthened variant of Theorem~\ref{thm:by:hand}.

\begin{theorem}[computer-assisted]\label{thm:by:computer}
Let~$G$ be a torsion-free group and let~$\alpha,\beta \in \mathbb{Q}[G]\setminus \{0\}$, with~$\length(\alpha) = n$ and~$\length(\beta) = m$, be non-zero elements of the group ring of~$G$ over~$\mathbb{Q}$.
If~$\alpha \beta = 0$ then
\begin{enumerate}
\item~$n >2$\label{item1b},
\item~$m >2$\label{item2b},
\item~$n >3$  or~$m>16$\label{item3b},
\item~$n >16$ or~$m>3$\label{item4b}, and
\item~$n >4$  or~$m>7$\label{item5b}.
\item~$n >7$  or~$m>4$\label{item6b}.
\end{enumerate}
\end{theorem}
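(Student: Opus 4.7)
The plan is to follow the same strategic reduction used for Theorem~\ref{thm:by:hand}, but to replace the by-hand case analysis with an exhaustive computer search. First I would invoke the reduction from~$\mathbb{Q}$ to~$\mathbb{F}_2$ outlined in the introduction, so that it suffices to rule out length combinations in~$\mathbb{F}_2[G]$ for any torsion-free group~$G$. Then I would use the correspondence between zero divisor pairs $(\alpha,\beta)$ of lengths~$(n,m)$ over~$\mathbb{F}_2$ and matched rectangles of dimensions~$(n,m)$: every counterexample with these length bounds would have to arise as a factor of an associated group of some such matched rectangle. Items~\ref{item1b} and~\ref{item2b} follow trivially as before, so the substantive work is to dispose of the length combinations~$(3,16)$, $(16,3)$, $(4,7)$, and $(7,4)$.

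Second, for each target pair~$(n,m)$ I would generate the finite list of matched rectangles of dimension at most~$(n,m)$ up to isomorphism. The naive enumeration is intractable because matched rectangles admit nontrivial symmetries (row/column permutations together with the action coming from relabeling group generators) and the search space grows rapidly with the parameters. The clean way to avoid generating isomorphic copies is McKay's canonical construction path method~\cite{canonicalMcKay}: one builds the matched rectangles cell by cell (or row by row) and, at each extension, accepts the new object only if the last added piece is the canonical one in the sense of a fixed canonical labeling. For this one needs a canonical labeling algorithm for matched rectangles; the paper alludes to a quadratic-time such algorithm, which I would plug in here directly.

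Third, for each candidate matched rectangle produced by the enumeration I would decide whether the associated group is a counterexample. The group is presented by generators and relators read off from the matched rectangle, and one needs to show that either the group is not torsion-free or the putative zero divisor is in fact trivial. I would implement the standard toolbox: searching for explicit torsion elements in the associated group (e.g.\ by trying short words and verifying via Todd--Coxeter-style coset enumeration that they have finite order), and, dually, trying to simplify the presentation to recognize a quotient known to be non-torsion-free or to reduce to a previously handled case. By the universality property of the associated groups, ruling out each candidate rules out any corresponding counterexample, and together with items~\ref{item1b} and~\ref{item2b} this yields~\ref{item3b}--\ref{item6b}.

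The main obstacle I expect is not correctness but computational feasibility: the number of matched rectangles at parameters like $(4,7)$ and $(3,16)$ must be kept in check by aggressive use of the canonical construction path pruning and by early-abort tests during construction (for instance, detecting partial configurations that already force torsion or already force the zero divisor to collapse). Balancing the cost of the canonical labeling call against the branching factor, and choosing a good extension order so that invalidating conditions are discovered as shallow in the search tree as possible, is where the engineering effort has to go; once the enumeration terminates, the verification step for each surviving candidate is comparatively routine.
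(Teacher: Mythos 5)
Your proposal matches the paper's proof in all essentials: the reduction from~$\mathbb{Q}$ to~$\mathbb{F}_2$, the correspondence with matched rectangles and their associated groups, isomorph-free enumeration via McKay's canonical construction path method using the quadratic-time canonical labeling, degeneracy-based pruning during the search, and computer-algebra verification (the paper uses GAP) of the surviving candidates. The only cosmetic difference is that the paper spells out its specific pruning rules (cyclic-closure torsion, periodic cycles, mismatching parallel sequences, cyclic or Baumslag--Solitar core subgroups) and restricts the enumeration to rectangles without proper sub-rectangles, which you leave implicit.
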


Figure~\ref{fig:length:combinations} visualizes the length combinations excluded by Theorems~\ref{thm:by:hand} and~\ref{thm:by:computer}.

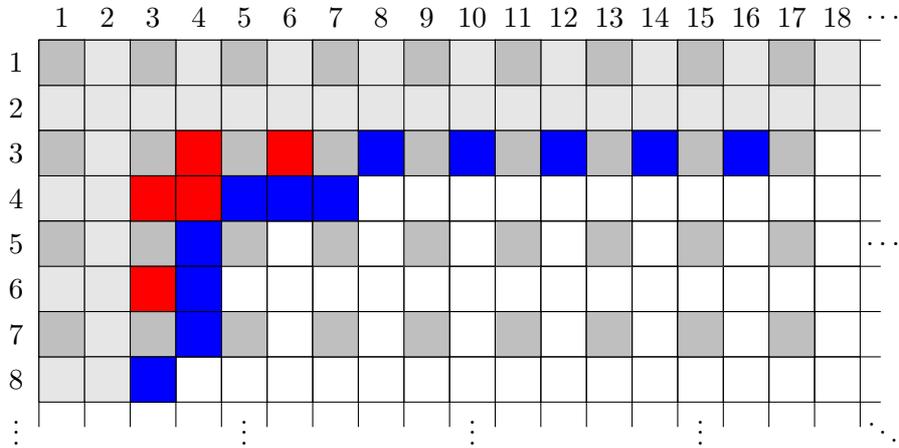
\begin{figure}[ht]
\centering
\begin{tikzpicture}[x=\resolvedcelldist, y=\resolvedcelldist, node distance=0 cm,outer sep = 0pt]

\resolvedblocks{18}{8}
\foreach \x/\y in {{1/1},{1/3},{1/5},{1/7},{3/1},{3/3},{3/5},{3/7},{5/1},{5/3},{5/5},{5/7},{7/1},{7/3},{7/5},{7/7},{9/1},{9/3},{9/5},{9/7},{11/1},%
{11/3},{11/5},{11/7},{13/1},{13/3},{13/5},{13/7},{15/1},{15/3},{15/5},{15/7},{17/1},{17/3},{17/5},{17/7}}
{
	\node[resolvedcell, fill=\bothoddcolor]  at (\x,-\y) {};
}
\foreach \x/\y in {{3/4},{4/3},{3/6},{6/3},{4/4}}
{
	\node[resolvedcell, fill=\byhandcolor]  at (\x,-\y) {};
}	
\foreach \x/\y in {{3/8},{8/3},{10/3},{12/3},{14/3},{16/3}, {4/5},{5/4},{4/6},{6/4},{4/7},{7/4}}
{
	\node[resolvedcell, fill=\bycomputercolor]  at (\x,-\y) {};
}	
	\node  at (19,0) {$\hdots$};
	\node  at (19,-5) {$\hdots$};
	\node  at (0,-9) {$\vdots$};
	\node  at (5,-9) {$\vdots$};
	\node  at (10,-9) {$\vdots$};
	\node  at (15,-9) {$\vdots$};
	\node  at (19,-9) {$\ddots$};
	\foreach \x in {1,...,19} 
	{
	    	\draw[-] (\x,-9) +(-0.5,0.5)  -- ++(-0.5,-0.05) {};
	}
	\foreach \y in {1,...,9} 
	{
	    	\draw[-] (19,-\y) +(-0.5,0.5)  -- ++(-0.05,0.5) {};
	}

\end{tikzpicture}
\caption{The figure depicts the various length combinations for which zero divisors cannot occur over~$\mathbb{Q}$. The gray shaded regions cannot occur due to the general statements for length at most~$2$ (Theorem~\ref{thm:length:2}) and due to the fact that one of the lengths must be even. Exclusion of the darker red and blue shaded regions is proven by combinatorial arguments (Theorem~\ref{thm:by:hand}) and with the aid of a computer (Theorem~\ref{thm:by:computer}), respectively.}\label{fig:length:combinations}
\end{figure}
\paragraph{Structure of the paper:} In Section~\ref{sec:matched:recs} we define matched rectangles and their associated groups and prove their universality with respect to the Zero Divisor Conjecture. In Section~\ref{sec:BS:groups} we briefly argue that all quotients of a solvable Baumslag-Solitar group satisfy the Zero Divisor Conjecture. Using this and other combinatorial arguments we then prove Theorem~\ref{thm:by:hand} in Section~\ref{sec:by:hand}. In Section~\ref{sec:canonical:labeling} we show how to label canonically matched rectangles without proper sub-rectangles in polynomial time and use this in Section~\ref{sec:by:computer} to prove Theorem~\ref{thm:by:computer} with the help of a computer. We conclude in Section~\ref{sec:conclusion}.

\section{Matched rectangles}\label{sec:matched:recs}

Let~$R$ be an integral domain and~$G$ a torsion-free group. It is well known that an element of length 2 cannot be a zero divisor in the group ring~$R[G]$. 
We start by showing this statement using a proof technique that captures some essential ideas of our approach.

\begin{theorem}\label{thm:length:2}
Let~$G$ be a torsion-free group and~$R$ be an integral domain. If~$\alpha \in R[G]\setminus \{0\}$ has length at most 2, then~$\alpha$ is not a zero divisor.
\end{theorem}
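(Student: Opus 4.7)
The plan is to split by the length of $\alpha$. Length $0$ is vacuous since $\alpha \ne 0$. For length $1$, write $\alpha = r_1 g_1$ and observe that $\alpha\beta = \sum_i r_1 s_i g_1 h_i$ lists each group element (a translate of some $h_i \in \supp(\beta)$) exactly once with coefficient $r_1 s_i$, which is nonzero since $R$ is an integral domain. Hence $\alpha\beta = 0$ forces $\beta = 0$.

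The substantive case is length $2$. I would write $\alpha = r_1 g_1 + r_2 g_2$ with $r_1, r_2 \ne 0$ and $g_1 \ne g_2$, assume a nonzero $\beta = \sum_i s_i h_i$ with $\alpha\beta = 0$, and examine the expansion
\[
\alpha\beta \;=\; \sum_i r_1 s_i g_1 h_i \;+\; \sum_i r_2 s_i g_2 h_i.
\]
The coefficient of any $g \in G$ in $\alpha\beta$ is a sum of at most two contributions, one from each of the two sums. Since $R$ is an integral domain, no single contribution $r_k s_i$ can vanish, so every element of $g_1\cdot\supp(\beta)$ must be matched by an element of $g_2\cdot\supp(\beta)$: for each $h_i \in \supp(\beta)$ there exists $h_j \in \supp(\beta)$ with $g_1 h_i = g_2 h_j$, i.e.\ $h_j = g_2^{-1} g_1 h_i$.

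Setting $x := g_2^{-1} g_1 \ne 1$, the above says that left multiplication by $x$ is a well-defined self-map of the finite set $\supp(\beta)$. Injectivity is immediate, so this map is a permutation, and some positive power $x^n$ acts as the identity on $\supp(\beta)$. Picking any $h \in \supp(\beta)$ and cancelling yields $x^n = 1$ in $G$. At this point I would invoke the torsion-free hypothesis to conclude $x = 1$, contradicting $g_1 \ne g_2$.

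The only delicate point is to verify that the cancellation pattern genuinely yields a self-map of $\supp(\beta)$ rather than merely a matching between two copies; this rests on both coefficients of $\alpha$ being nonzero together with $R$ having no zero divisors, so that no contribution $r_k s_i$ can cancel on its own. Beyond that, the torsion-free hypothesis is used in exactly one place, to promote $x^n = 1$ to $x = 1$. This is precisely the flavour of support-combinatorics argument that the later sections will abstract through matched rectangles.
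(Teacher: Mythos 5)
Your proof is correct and follows essentially the same route as the paper: the cancellation pattern in $\alpha\beta$ forces left multiplication by $g_2^{-1}g_1$ to be a permutation of the finite set $\supp(\beta)$, and a positive power of this permutation being the identity produces a torsion element, contradicting torsion-freeness. The only cosmetic difference is that the paper first embeds $R$ into a field and normalizes $\alpha = 1 + rg$ and $\beta = 1 + \ell_2 h_2 + \cdots + \ell_m h_m$ before extracting the permutation (there phrased as a bijection $\phi$ on indices with $h_i = g h_{\phi(i)}$), whereas you carry the coefficients along directly; the underlying argument is identical.
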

\begin{proof}
The statement is obvious if~$\alpha$ has length 1. Since every integral domain embeds into a field, we can assume w.l.o.g. that~$R$ is a field.
Suppose a zero divisor $\alpha$ has length 2 in a torsion-free group G.  By symmetry it suffices to show that~$\alpha$ is not a left zero divisor, i.e., we may assume for contradiction that there exists a $\beta \in R[G]\setminus \{0\}$ such that $\alpha \beta = 0$. By multiplying with a suitable group element and a suitable ring element from the left we can see that we can choose~$\alpha$ such that $\alpha = 1 + r g$ for some $1\neq g \in G$ and~$r\in R$. Similarly, by multiplication from the right, we can show that~$\beta$ can be chosen to have the form $\beta= 1 + \ell_2 h_2 + \ldots + \ell_m h_{m}$, where $m$ is the length of $\beta$, and all $h_i$ are distinct and different from~1. For notational simplicity we define $h_1 = 1$ and~$\ell_1 = 1$.

Since $\alpha  \beta = 1 + \ell_2 h_2 + \ldots + \ell_m h_{m} + g + r \ell_2 g h_2 + \ldots + r \ell_m g h_{m} = 0$, and since $g h_i \neq g h_j$ for $i\neq j$, there is a bijection~$\phi\colon  \{1,\ldots,m\}\rightarrow \{1,\ldots,m\}$ such that~$h_i = g h_{\phi(i)}$. 

We now argue that there is a~$t\in \{1,\ldots,m\}$ such that~$g^t = 1$. By induction on~$k$ we see that for all~$i\in\{1,\ldots,m\}$ and all~$k\in \mathbb{Z}$ we have~$h_i = g^k h_{\phi^k(i)}$. Since~$\phi$ is a permutation, there is a~$t>0$ such that~$\phi^t(1) = 1$ and thus~$1 = h_1 = g^t h_1  = g^t$.
This shows that G has torsion and yields a contradiction.
\end{proof}

A similar statement for group ring elements~$\alpha$ of length at most~$3$ is not known. Note that Kaplanski's Zero Divisor Conjecture is equivalent to the theorem being true for all lengths. 
Though it is not clear how to show a similar statement for length~$3$, for some rings we can show statements of the following form. If $\alpha \beta = 0$ then $\alpha$ must have length longer than $n\in \mathbb{N}$ or $\beta$ must have length longer than $m\in \mathbb{N}$. To prove a statement of this form, we first explain how to reduce the problem to a statement over the field~$\mathbb{F}_2$.

\begin{lemma}\label{lem:Q:to:FP}
Let~$G$ be a group. If~$\alpha,\beta \in \mathbb{Q}[G]\setminus\{0\}$ with~$\alpha  \beta = 0$ then for every prime number~$p$ the group ring~$\mathbb{F}_p [G]$ has elements~$\alpha',\beta' \in \mathbb{F}_p[G]\setminus\{0\}$ with~$\alpha'  \beta' = 0$ such that~$\length(\alpha') \leq \length(\alpha)$ and~$\length(\beta') \leq \length(\beta)$.
\end{lemma}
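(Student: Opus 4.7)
The plan is a routine clearing-of-denominators followed by reduction modulo $p$, with a small preliminary step to ensure that neither reduction vanishes. There are essentially three steps.

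First, I would clear denominators. Since $\alpha$ and $\beta$ each have only finitely many non-zero coefficients, there exist positive integers $N, M$ such that $N\alpha, M\beta \in \mathbb{Z}[G]$. Scaling by a non-zero rational does not alter the support of an element, so $\length(N\alpha) = \length(\alpha)$ and $\length(M\beta) = \length(\beta)$. Moreover $(N\alpha)(M\beta) = NM\,\alpha\beta = 0$. Thus, after relabelling, we may assume from the outset that $\alpha, \beta \in \mathbb{Z}[G]\setminus\{0\}$ with the original lengths.

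Second, fix the prime $p$ and normalise. Let $k$ be the largest non-negative integer such that $p^k$ divides every coefficient of $\alpha$; such a maximum exists because $\alpha$ is non-zero and has finite support. Define $\ell$ analogously for $\beta$. Replacing $\alpha$ by $p^{-k}\alpha$ and $\beta$ by $p^{-\ell}\beta$ keeps both in $\mathbb{Z}[G]$, preserves their supports (hence their lengths), and preserves the product being zero.

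Third, apply the ring homomorphism $\pi_p\colon \mathbb{Z}[G] \to \mathbb{F}_p[G]$ induced by the canonical surjection $\mathbb{Z} \to \mathbb{F}_p$, and set $\alpha' := \pi_p(\alpha)$ and $\beta' := \pi_p(\beta)$. By the choice of $k$ and $\ell$, at least one coefficient of $\alpha$ (respectively $\beta$) is coprime to $p$, so $\alpha', \beta' \neq 0$. The support of $\alpha'$ is contained in that of $\alpha$ (coefficients divisible by $p$ collapse to zero), so $\length(\alpha')\leq \length(\alpha)$, and symmetrically for $\beta'$. Finally $\alpha'\beta' = \pi_p(\alpha)\pi_p(\beta) = \pi_p(\alpha\beta) = 0$.

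There is no real obstacle here: the only subtlety is making sure the reductions modulo $p$ do not become zero, which is exactly the purpose of the $p$-adic normalisation in the second step. Everything else is functoriality of $\pi_p$ and the observation that reducing coefficients can only shrink the support.
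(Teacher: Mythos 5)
Your proof is correct and follows exactly the same route as the paper: scale $\alpha$ and $\beta$ by suitable rationals so that each lies in $\mathbb{Z}[G]$ with at least one coefficient coprime to $p$, then apply the canonical projection to $\mathbb{F}_p[G]$. The paper compresses your first two steps into a single sentence (``multiplying by suitable rationals''), but the argument is identical, and your explicit $p$-adic normalisation is a fair expansion of that phrase.
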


\begin{proof}

By multiplying $\alpha$ and $\beta$ with suitable rationals, we can achieve that both $\alpha$ and $\beta$ contain only integral coefficients and that they both contain a coefficient that is not divisible by~$p$. 

In this case, the canonical projections onto $\mathbb{F}_p[G]$ of both $\alpha$ and $\beta$ are non-trivial elements~$\alpha'$ and~$\beta'$ with~$\length(\alpha') \leq \length(\alpha)$ and~$\length(\beta') \leq \length(\beta)$. Moreover~$\alpha' \beta'= 0$ in~$\mathbb{F}_p[G]$.
\end{proof}

\begin{corollary}
Let~$G$ be a group. If for some prime number~$p$ the group ring~$\mathbb{F}_p [G]$ does not have non-trivial zero divisors, then~$\mathbb{Q}[G]$ does not have non-trivial zero divisors.
\end{corollary}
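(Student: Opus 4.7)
The plan is to derive this immediately as the contrapositive of Lemma~\ref{lem:Q:to:FP}. Suppose, for contradiction, that $\mathbb{Q}[G]$ contains non-trivial zero divisors, i.e., there exist $\alpha,\beta\in\mathbb{Q}[G]\setminus\{0\}$ with $\alpha\beta=0$. Fix the prime $p$ for which $\mathbb{F}_p[G]$ is assumed to have no non-trivial zero divisors. Apply Lemma~\ref{lem:Q:to:FP} to $\alpha,\beta$ with this specific $p$: the lemma produces elements $\alpha',\beta'\in\mathbb{F}_p[G]\setminus\{0\}$ satisfying $\alpha'\beta'=0$, contradicting the hypothesis on $\mathbb{F}_p[G]$. (The length bounds in the lemma are not needed for the corollary; only the existence of non-trivial zero divisors in $\mathbb{F}_p[G]$ matters.)

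There is essentially no obstacle. The only thing to double-check is that the lemma's conclusion really yields non-trivial zero divisors in $\mathbb{F}_p[G]$ in the strict sense, which is the case because $\alpha',\beta'$ are explicitly asserted to be non-zero in $\mathbb{F}_p[G]$ and they multiply to zero. Thus the corollary follows directly, without any further combinatorial or ring-theoretic input.
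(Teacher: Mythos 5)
Your proof is correct and is exactly the argument the paper intends: the corollary is stated immediately after Lemma~\ref{lem:Q:to:FP} with no separate proof precisely because it is the contrapositive of that lemma, ignoring the length bounds. Nothing is missing.
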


The corollary justifies considering the special case where~$R = \mathbb{F}_2$, for which the conjecture is also open. Consider a group~$G$ that is a potential counterexample to the Zero Divisor Conjecture. Suppose for~$\alpha,\beta \in \mathbb{F}_2[G]\setminus \{0\}$ we have~$\alpha  \beta = 0$. Further suppose~$\alpha = g_1+\ldots+g_n$ and~$ \beta = h_1+\ldots+h_m$. W.l.o.g. we can assume that~$G$ is generated by~$\supp(\alpha) \cup \supp(\beta)$. Otherwise we replace~$G$ by the subgroup generated by this set. In order for the equation $\alpha  \beta = 0$ to hold, there must be a matching of the pairs in~$\{1,\ldots,n\} \times  \{1,\ldots,m\}$ such that for products corresponding to matched pairs equality holds, i.e., if~$(i,j)$ is matched to~$(i',j')$ then $g_{i} h_{{j}} = g_{i'} h_{{j'}}$. Figure~\ref{fig:example:of:matched:rectangle} illustrates such a matching as an example. Note that the matching is not unique if more than two of the occurring products yield the same group element. However, for our purpose it will always suffice to pick an arbitrary matching.

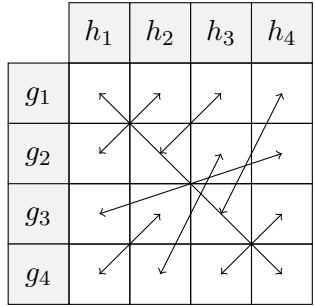
\begin{figure}[ht]
\centering
\begin{tikzpicture}[x=\celldist, y=\celldist, node distance=0 cm,outer sep = 0pt]

\glabels{4}
\hlabels{4}
\recblocks{4}{4}

\foreach \x/\y in { {(1,-2)/(2,-1)},  {(2,-2)/(3,-1)}, {(3,-3)/(4,-1)}, {(1,-4)/(2,-3)}, {(2,-4)/(3,-2)}, {(3,-4)/(4,-3)}, {(4,-2)/(1,-3)}, {(1,-1)/(4,-4)}}
{
      \draw[<->] \x -- \y {};
}
\end{tikzpicture}
\caption{A matched~$4\times 4$ rectangle.}\label{fig:example:of:matched:rectangle}
\end{figure}
Consider the finitely presented group~$\overline{G}$ given by the presentation \[\overline{G} = \langle g_1,\ldots, g_n ,h_1,\ldots, h_m \mid \{g_{i} h_{{j}} = g_{i'} h_{{j'}} \mid (i,j) \text{ is matched to } (i',j')\}\rangle.\] The group $\overline{G}$ projects homomorphically onto $G$ and by construction also contains zero divisors.
If it follows from the relations in $\overline{G}$ that two of the generators~$g_i$ are equal or two of the generators~$h_j$ are equal, then this is also the case in the group $G$. This contradicts the assumption that in~$\mathbb{F}_2 [G]$ the elements~$\alpha$ and~$\beta$ have length~$n$ and~$m$ respectively. More generally, in~$\notorsion(\overline{G})$, i.e., in the universal torsion-free image of~$\overline{G}$, all generators~$g_1,\ldots, g_n$ must be distinct and all generators~$ h_1,\ldots, h_m$  must be distinct. Recall that the \emph{universal torsion-free image} of a group is the unique quotient obtained as the limit of the process of repeatedly adding relations that force all torsion elements to be trivial. By a standard category theoretic argument, one can see that all torsion-free quotients of a group are also quotients of the universal torsion-free image (see~\cite{MR1459854}).

\begin{definition}
An~\emph{$n\times m$ matched rectangle} is a perfect matching~$M$ of the elements in~$\{(i,j) \mid i \in \{1,\ldots,n\}, j\in \{1,\ldots,m\}\}$, i.e., it is a partition of~$\{(i,j) \mid i \in \{1,\ldots,n\}, j\in \{1,\ldots,m\}\}$ into sets of size 2.

Given an~$n\times m$ matched rectangle~$M$ we define the~\emph{associated group}~$\assocgroup(M)$ as the universal torsion-free image of the group given by the presentation~\[\langle g_1,\ldots, g_n ,h_1,\ldots, h_m \mid \{g_{i} h_{j} = g_{i'} h_{{j'}} \mid (i,j) \text{ is matched to } (i',j') \text{ by } M\}\rangle.\]
\end{definition}
Note that for fixed~$n,m \in \mathbb{N}$ there are only finitely many $n\times m$ matched rectangles and consequently only finitely many associated groups.

Recall that a partial matching of a set~$S$ is a matching of a subset of~$S$. We define an $n\times m$ partially matched rectangle~$M$ to be a partial matching~$M$ of the elements in~$\{(i,j) \mid i \in \{1,\ldots,n\}, j\in \{1,\ldots,m\}\}$ and we define its associated group~$\assocgroup(M)$ in the same way in which it is defined for matched rectangles.

We say that a partially matched rectangle is \emph{degenerate} if for the associated group~$\assocgroup(M)$ it is not that case that all generators~$h_1,\ldots, h_n$ are distinct or if it is not the case that all generators~$g_1,\ldots, g_m$ are distinct. In this case we also call the associated group~$\assocgroup(M)$ degenerate.

\begin{lemma}
If~$M$ is a matched rectangle and the associated group~$\assocgroup(M)$ is not degenerate, then the group ring~$\mathbb{F}_2[\assocgroup(M)]$ contains non-trivial zero divisors.
\end{lemma}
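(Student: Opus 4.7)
The plan is to explicitly exhibit the non-trivial zero divisors: take $\alpha = g_1 + \cdots + g_n$ and $\beta = h_1 + \cdots + h_m$ in $\mathbb{F}_2[\assocgroup(M)]$, where $g_1,\ldots,g_n,h_1,\ldots,h_m$ denote the images in $\assocgroup(M)$ of the generators named in the presentation defining the associated group.

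First I would verify that $\alpha$ and $\beta$ are non-zero. By assumption $M$ is not degenerate, which by definition means that in $\assocgroup(M)$ the generators $g_1,\ldots,g_n$ are pairwise distinct and the generators $h_1,\ldots,h_m$ are pairwise distinct. Hence $\alpha$ is a sum of $n$ distinct group elements and $\beta$ a sum of $m$ distinct group elements over $\mathbb{F}_2$; both are therefore non-zero (the case $n=0$ or $m=0$ being vacuous or trivially excluded since matched rectangles have positive dimensions to admit a perfect matching).

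Next I would compute the product
\begin{equation*}
\alpha \beta \;=\; \sum_{i=1}^{n}\sum_{j=1}^{m} g_i h_j
\end{equation*}
in $\mathbb{F}_2[\assocgroup(M)]$. Because $M$ is a \emph{perfect} matching of $\{1,\ldots,n\}\times\{1,\ldots,m\}$, the $nm$ index pairs partition into $nm/2$ unordered pairs $\{(i,j),(i',j')\}$. For each such pair, the defining relation $g_i h_j = g_{i'} h_{j'}$ holds in the group $\overline{G}$ presented before passing to the universal torsion-free image, and therefore also in the quotient $\assocgroup(M)$. Consequently the two summands $g_i h_j$ and $g_{i'} h_{j'}$ in the sum above represent the same element of $\assocgroup(M)$, and over $\mathbb{F}_2$ they cancel. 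Summing over all matched pairs gives $\alpha\beta = 0$.

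There is no real obstacle here: the entire content of the lemma is that the matching is engineered precisely so that the products $g_i h_j$ pair off in $\assocgroup(M)$, while the non-degeneracy hypothesis is exactly what rules out the possibility that $\alpha$ or $\beta$ collapses to zero. The only mild subtlety worth spelling out is the distinction between the presented group $\overline{G}$ and its universal torsion-free image $\assocgroup(M)$: the relations $g_i h_j = g_{i'} h_{j'}$ survive the quotient map, so the cancellation argument transfers, while non-degeneracy was defined with respect to $\assocgroup(M)$ itself, which is what guarantees non-triviality on the nose.
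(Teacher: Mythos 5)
Your proof is correct and follows essentially the same route as the paper: take $\alpha = g_1+\cdots+g_n$ and $\beta = h_1+\cdots+h_m$, use non-degeneracy to conclude both are non-zero, and use the matching relations to get $\alpha\beta = 0$. The only difference is that you spell out the pairwise cancellation over $\mathbb{F}_2$ (and the fact that the relations survive the quotient to the universal torsion-free image), which the paper compresses into ``by definition of the associated group.''
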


\begin{proof}
Suppose~$M$ is a non-degenerate matched~$n\times m$ rectangle.
Consider the elements~$\alpha = g_1+\ldots+g_n$ and~$\beta = h_1+\ldots+h_m$. Since~$M$ is not degenerate, all generators in~$h_i$ are distinct. Likewise all generators~$g_j$ are distinct and thus~$\alpha, \beta \in \mathbb{F}_2[G]\setminus \{0\}$ . By definition of the associated group~$\alpha  \beta = 0$ in~$\mathbb{F}_2[G]$.
\end{proof}

We define~$\alpha_M := g_1+\ldots +g_n$ and~$\beta_M := h_1+\ldots +h_m$ as the \emph{zero divisors of~$\mathbb{F}_2[\assocgroup(M)]$ corresponding to~$M$}.

Being a universal torsion-free group, the group~$\assocgroup(M)$ is in particular torsion-free. Therefore, if~$\assocgroup(M)$ is not degenerate, it constitutes a counterexample to the Zero Divisor Conjecture over~$\mathbb{F}_2$. The lemma thus shows that a non-degenerate matched rectangle gives rise to a counterexample to the conjecture over~$\mathbb{F}_2$. The converse to this statement is also true, as shown by the following lemma.
\begin{lemma}
Let~$G$ be a torsion-free group. If~$\alpha,\beta \in \mathbb{F}_2[G]\setminus \{0\}$ with~$\alpha  \beta = 0$ then there is a non-degenerate~$\length(\alpha)\times \length(\beta)$ matched rectangle.
\end{lemma}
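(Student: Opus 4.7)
The plan is to build the matched rectangle directly from the expansion of $\alpha\beta$, and then use the fact that $G$ is torsion-free to deduce non-degeneracy by invoking the universal property of the universal torsion-free image.

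First I would write $\alpha = g_1+\cdots+g_n$ and $\beta = h_1+\cdots+h_m$ with $\length(\alpha)=n$ and $\length(\beta)=m$, so that in particular the $g_i$ are pairwise distinct in $G$ and likewise the $h_j$. Expanding the product gives $\alpha\beta = \sum_{i,j} g_i h_j = 0$ in $\mathbb{F}_2[G]$. Because the coefficient ring has characteristic $2$, every group element appearing in this expansion must appear an even number of times. Partition the $nm$ index pairs according to the value of $g_i h_j \in G$; each block of the partition has even cardinality, so I can pick, within each block, an arbitrary perfect matching on its elements. Taking the union over all blocks yields an $n\times m$ matched rectangle $M$.

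Next I would verify that $M$ is non-degenerate. By construction, every defining relation $g_i h_j = g_{i'}h_{j'}$ of the presentation group
\[\overline{G} = \langle g_1,\ldots,g_n,h_1,\ldots,h_m \mid g_i h_j = g_{i'}h_{j'}\text{ for }(i,j)\,M\,(i',j')\rangle\]
holds in $G$, so there is a homomorphism $\varphi\colon \overline{G}\to G$ sending each generator to the correspondingly named element of $G$. Let $H=\varphi(\overline{G})\le G$ denote its image. Since $H$ is a subgroup of the torsion-free group $G$, it is itself torsion-free. By the universal property of the universal torsion-free image (every homomorphism from $\overline{G}$ to a torsion-free group factors through $\notorsion(\overline{G})$), the map $\varphi$ descends to a homomorphism $\overline{\varphi}\colon \assocgroup(M)=\notorsion(\overline{G}) \to H$.

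Finally, in $H$ the elements $g_1,\ldots,g_n$ are pairwise distinct, for otherwise $\alpha$ could be rewritten with fewer than $n$ summands in $\mathbb{F}_2[G]$, contradicting $\length(\alpha)=n$; the same reasoning applies to $h_1,\ldots,h_m$. Because the homomorphism $\overline{\varphi}$ separates these generators in $H$, they are already pairwise distinct in $\assocgroup(M)$, so $M$ is non-degenerate. There is no real obstacle: the only minor subtleties are recognising that minimality of length forces pairwise distinctness of the witnessing generators, and correctly applying the universal property of $\notorsion(\cdot)$ to obtain the factorisation through $\assocgroup(M)$.
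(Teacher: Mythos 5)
Your proof is correct and takes essentially the same approach as the paper: extract a matching of the index pairs from the vanishing of $\alpha\beta$ in characteristic~2, then map $\assocgroup(M)$ homomorphically to the subgroup of $G$ generated by the supports and use minimality of the lengths to conclude that the generators have pairwise distinct images, hence are distinct in $\assocgroup(M)$. You merely spell out two steps the paper leaves implicit, namely the even-multiplicity argument producing the matching and the universal-property factorization through the universal torsion-free image.
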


\begin{proof}
Suppose~$\alpha = g_1+\ldots +g_n$ and~$\beta = h_1+\ldots +h_m$. Since~$\alpha  \beta = 0$, there is a matching of~$\{1,\ldots,n\} \times  \{1,\ldots,m\}$ such that~$(i,j)$ is matched to~$(i',j')$ then $g_{i} h_{{j}} = g_{i'} h_{{j'}}$. Let~$M$ be the matched rectangle that corresponds to this matching. By definition, the associated group~$\assocgroup(M)$ maps canonically, homomorphically to the subgroup of~$G$ that is generated by~$\{g_1,\ldots,g_n,h_1,\ldots,h_m\}$. Additionally this canonical homomorphism has the property that the images of all generators~$g_i$ are distinct and the images of all generators~$h_j$ are distinct. Thus~$\assocgroup(M)$ is not degenerate.
\end{proof}

We say two partially matched rectangles are isomorphic if there is a permutation of the columns and the rows that transforms one rectangle into the other. For an~$n\times m$ matched rectangle~$M$ and an~$n\times m$ matched rectangle~$M'$ with~$n'\leq n$ and~$m'\leq m$ we say~$M'$ is a \emph{sub-rectangle} of~$M$ if there are injections~$\phi\colon \{1,\ldots,n'\}\rightarrow \{1,\ldots,n\}$ and~$\phi'\colon \{1,\ldots,m'\}\rightarrow \{1,\ldots,m\}$ such that the following holds.
If~$(i,j)$ is matched to~$(i',j')$ by $M'$ then~$(\phi(i),\phi'(j))$ is matched to~$(\phi(i'),\phi'(j'))$ by~$M$. We call the pair of maps~$(\phi,\phi')$ a \emph{homomorphism} from~$M'$ to~$M$.

\begin{lemma}\label{lem:degeneretedness:antihereditary}
Let~$M$ and~$M'$ be partially matched rectangles.
If~$M'$ is a sub-rectangle of~$M$ 
and~$\assocgroup(M')$ is degenerate, then~$\assocgroup(M)$ is degenerate.
\end{lemma}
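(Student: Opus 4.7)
The plan is to construct a natural homomorphism $\psi\colon \assocgroup(M') \to \assocgroup(M)$ induced by the sub-rectangle embedding $(\phi,\phi')$, and then transport the degeneracy of $\assocgroup(M')$ forward through $\psi$, using that $\phi$ and $\phi'$ are injective.

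First I would define a map on the free group on the generators $g_1,\ldots,g_{n'},h_1,\ldots,h_{m'}$ of the presentation underlying $\assocgroup(M')$, by sending $g_i \mapsto g_{\phi(i)}$ and $h_j \mapsto h_{\phi'(j)}$, where on the right-hand side the symbols refer to the generators of $\assocgroup(M)$. The key observation is that every defining relation of the presentation of $M'$ is satisfied by the images: indeed, if $(i,j)$ is matched to $(i',j')$ by $M'$, then by definition of a sub-rectangle $(\phi(i),\phi'(j))$ is matched to $(\phi(i'),\phi'(j'))$ by $M$, so the relation $g_{\phi(i)}h_{\phi'(j)}=g_{\phi(i')}h_{\phi'(j')}$ holds in $\assocgroup(M)$. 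Consequently this map descends to a homomorphism from the finitely presented group underlying $M'$ into $\assocgroup(M)$.

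Next I would invoke the universal property of the universal torsion-free image: since $\assocgroup(M)$ is torsion-free, any homomorphism from the finitely presented group of $M'$ into $\assocgroup(M)$ factors uniquely through its universal torsion-free image $\assocgroup(M')$. This gives the desired homomorphism $\psi\colon \assocgroup(M') \to \assocgroup(M)$ with $\psi(g_i)=g_{\phi(i)}$ and $\psi(h_j)=h_{\phi'(j)}$.

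Finally, assume $\assocgroup(M')$ is degenerate. By definition, either $g_i = g_{i'}$ in $\assocgroup(M')$ for some indices $i\neq i'$, or $h_j = h_{j'}$ for some $j \neq j'$. Applying $\psi$ we get $g_{\phi(i)} = g_{\phi(i')}$ or $h_{\phi'(j)} = h_{\phi'(j')}$ in $\assocgroup(M)$, and since $\phi$ and $\phi'$ are injective, the indices on each side remain distinct. This is exactly the definition of $\assocgroup(M)$ being degenerate. The main (minor) obstacle is simply making the factorization through the universal torsion-free image rigorous; everything else is formal.
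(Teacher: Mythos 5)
Your proof is correct and takes essentially the same route as the paper's: both construct the homomorphism $\assocgroup(M')\to\assocgroup(M)$ induced by $(\phi,\phi')$, sending generators to generators, and then push the coincidence of two generators forward, with injectivity of $\phi$ and $\phi'$ guaranteeing the image indices stay distinct. The only difference is that you make explicit the factorization through the universal torsion-free image (justified since $\assocgroup(M)$ is torsion-free), a step the paper's proof leaves implicit.
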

\begin{proof}
Let~$n'\leq n$ and~$m'\leq m$ and~$(\phi,\phi')$ be a homomorphism from an~$n'\times m'$ matched rectangle~$M'$ 
to an~$n\times m$ matched rectangle~$M$. Let~$g'_1,\ldots,g'_{n'}$ and~$h'_1,\ldots,h'_{m'}$ be the generators of~$\assocgroup(M')$. 
Then the map that sends~$g_i'$ to~$\phi(g_i')$ and~$h_j'$ to~$\phi'(h_j')$ extends to a homomorphism from~$\assocgroup(M')$ to~$\assocgroup(M)$ sending generators to generators. Since~$\assocgroup(M')$ is degenerate, there are two generators~$g_j'$ and~$g'_{j'}$ which are equal or two generators~$h_j'$ and~$h'_{j'}$ which are equal. Thus~$\phi(g_i')$ and~$\phi(g'_{i'})$ are equal or~$\phi'(h'_j)$ and~$\phi'(h'_{j'})$ are equal, showing that~$M$ is degenerate.
\end{proof}

We now define a family of particularly simple partially matched rectangles that we will employ frequently.

\begin{definition}
For~$i\geq 2$ let~$M_i^{\cyc}$ be the~$2\times i$ partially matched rectangle that is given by the matching that matches~$(j,1)$ with~$(j+1,2)$ for~$j\in \{1,\ldots,i-1\}$. (See Figure~\ref{fig:cyc:rectangles}.)
\end{definition}

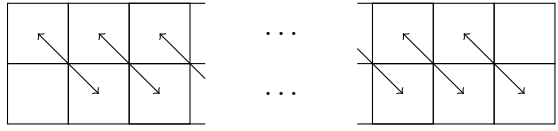
\begin{figure}[ht]
\centering
\begin{tikzpicture}[x=\celldist, y=\celldist, node distance=0 cm,outer sep = 0pt]
\recblocks{3}{2}
\node[cell]  at (7,-1) {};
\node[cell]  at (7,-2) {};
\node[cell]  at (8,-1) {};
\node[cell]  at (8,-2) {};
\node[cell]  at (9,-1) {};
\node[cell]  at (9,-2) {};

\draw[-] (2.5,-0.5) -- (3.75,-0.5) {};
\draw[-] (2.5,-1.5) -- (3.75,-1.5) {};
\draw[-] (2.5,-2.5) -- (3.75,-2.5) {};

\draw[-] (6.25,-0.5) -- (7.5,-0.5) {};
\draw[-] (6.25,-1.5) -- (7.5,-1.5) {};
\draw[-] (6.25,-2.5) -- (7.5,-2.5) {};

\foreach \x/\y in { {(1,-1)/(2,-2)},  {(2,-1)/(3,-2)}, {(8,-1)/(9,-2)}, {(7,-1)/(8,-2)}}
{
      \draw[<->] \x -- \y {};
}
\draw[<-] (3,-1) -- (3.75,-1.75) {};
\draw[->] (6.25,-1.25) -- (7,-2) {};

\node at (5,-1) {$\ldots$};
\node at (5,-2) {$\ldots$};
\end{tikzpicture}
\caption{The partially matched rectangles~$M_i^{\cyc}$.}\label{fig:cyc:rectangles}
\end{figure}

It is easy to see that if~$M_i^{\cyc}$ is a sub-rectangle of an~$m\times i$ partially matched rectangle~$M$ with~$m\geq 2$ then~$M$ is degenerate. We will prove a generalization of this statement at the end of this section.
Recall that for an element~$\alpha = g_1+\ldots +g_k$ of length~$k$, the \emph{support of~$\alpha$}, denoted~$\supp(\alpha)$, is the set~$\{g_1,\ldots,g_k\}$.

\begin{lemma}\label{lem:generators:of:rectangle}
If~$M$ is a matched rectangle without matched sub-rectangle, then for every~$h\in  \supp(\beta_M)$ the set~$\{h\}\cup \supp(\alpha_M)$ is a generating set of~$\assocgroup(M)$. If additionally~$1\in\supp(\beta_M)$ then~$\supp(\alpha_M)$ is a generating set of~$\assocgroup(M)$.
\end{lemma}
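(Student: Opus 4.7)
The plan is to fix an arbitrary $h = h_{j_0} \in \supp(\beta_M)$, let $H$ denote the subgroup of $\assocgroup(M)$ generated by $\{h\}\cup\supp(\alpha_M)$, and argue that every remaining generator $h_j$ already lies in $H$. The driving observation is that each defining relation $g_i h_j = g_{i'} h_{j'}$, coming from a match $(i,j) \sim (i',j')$ in $M$, rewrites as $h_j = g_i^{-1} g_{i'} h_{j'}$. Since both $g_i$ and $g_{i'}$ sit in $H$ by construction, this identity propagates membership: $h_j \in H$ if and only if $h_{j'} \in H$.

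To organise this propagation, I would introduce an auxiliary column graph $\Gamma$ on the vertex set $\{1,\ldots,m\}$ in which distinct indices $j,j'$ are joined by an edge whenever $M$ matches some cell of column $j$ to some cell of column $j'$. By the observation above, the set $\{j : h_j \in H\}$ is a union of connected components of $\Gamma$, and it contains $j_0$. Hence it suffices to show that $\Gamma$ is connected.

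Suppose for contradiction that $\Gamma$ decomposes as a disjoint union $J \sqcup J'$ of two non-empty sets with no edges between them. Then every cell in the columns indexed by $J$ is matched by $M$ to a cell in the same region, so the restriction of $M$ to these $n \cdot |J|$ cells is itself a perfect matching. Taking $\phi$ to be the identity on $\{1,\ldots,n\}$ and $\phi' \colon \{1,\ldots,|J|\} \to \{1,\ldots,m\}$ to be an enumeration of $J$, this restricted matching is an $n \times |J|$ matched rectangle which, by the paper's definition, is a sub-rectangle of $M$; since $|J| < m$ it is proper, contradicting the hypothesis on $M$. Therefore $\Gamma$ is connected, every $h_j$ lies in $H$, and because $\{g_1,\ldots,g_n,h_1,\ldots,h_m\}$ already generates $\assocgroup(M)$, we conclude $H = \assocgroup(M)$. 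For the additional claim, if $1 \in \supp(\beta_M)$ we simply take $h = 1$ in the first part, so that $\{h\}\cup\supp(\alpha_M) = \supp(\alpha_M)$.

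The most delicate step is the reduction ``$\Gamma$ disconnected $\Rightarrow$ proper matched sub-rectangle of $M$''. One must check both that every cell of the restricted region remains matched (which is precisely what the absence of $\Gamma$-edges between $J$ and $J'$ guarantees) and that the restricted object fits the formal definition of a sub-rectangle via the injections $\phi,\phi'$. Everything else is a short group-theoretic rewriting using only the defining relations of $\assocgroup(M)$ and the fact that $\supp(\alpha_M)\subseteq H$ by construction.
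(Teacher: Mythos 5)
Your proposal is correct and takes essentially the same route as the paper, whose entire proof is the single sentence that ``by using the defining relations'' the subgroup generated by $\{h\}\cup\supp(\alpha_M)$ contains all of $\supp(\beta_M)$; your rewriting $h_j = g_i^{-1}g_{i'}h_{j'}$ is exactly that propagation, and your column-graph connectivity argument (disconnection would produce a proper $n\times|J|$ matched sub-rectangle) is precisely the detail the paper leaves implicit about where the no-sub-rectangle hypothesis enters. The only cosmetic point is that for the last claim one should say the subgroups generated by $\{1\}\cup\supp(\alpha_M)$ and by $\supp(\alpha_M)$ coincide, rather than that the two sets are equal.
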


\begin{proof}
By using the defining relations, it follows that the smallest subgroup that contains~$\{h\}\cup \supp(\alpha_M)$ contains all elements in~$\supp(\beta_M)$. 
\end{proof}

For an~$n\times m$ partially matched rectangle~$M$ we define for~$(i,j) \in \{1,\ldots ,n\} \times\{1,\ldots ,m\}$ the~\emph{$(i,j)$-core subgroup} as the quotient of~$\assocgroup(M)$ obtained by adding the relations~$g_i = h_j = 1$.

\begin{lemma}
Let~$M$ be an~$n\times m$ matched rectangle. All core subgroups of~$M$ are isomorphic. 
Moreover, the group~$\assocgroup(M)$ is isomorphic to the free product of the core subgroup with the free group of rank 2.
\end{lemma}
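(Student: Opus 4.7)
The plan is to establish the second claim of the lemma first---that $\assocgroup(M)\cong F_2*\mathrm{core}_{(i_0,j_0)}(M)$ for every choice of $(i_0,j_0)$---and then to deduce from uniqueness of free product decompositions that all core subgroups are isomorphic.

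First I would perform a change of generators at the level of the raw presentation. Let $\overline{G}$ be the group defined by the relations of $M$ without yet taking the universal torsion-free image, so that $\assocgroup(M)=\notorsion(\overline{G})$. Fix $(i_0,j_0)$ and set $a:=g_{i_0}$, $b:=h_{j_0}$, together with $x_i:=g_{i_0}^{-1}g_i$ for $i\neq i_0$ and $y_j:=h_jh_{j_0}^{-1}$ for $j\neq j_0$; by convention put $x_{i_0}=y_{j_0}=1$. Then $g_i=ax_i$ and $h_j=y_jb$, so each defining relation $g_ih_j=g_{i'}h_{j'}$ becomes $ax_iy_jb=ax_{i'}y_{j'}b$, which simplifies to $x_iy_j=x_{i'}y_{j'}$. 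These new relations involve neither $a$ nor $b$, hence the raw presentation splits as a free product
\[\overline{G}\;\cong\;F_2(a,b)\,*\,\overline{K},\]
where $\overline{K}$ is the raw presentation of the $(i_0,j_0)$-core, i.e.\ the quotient of $\overline{G}$ by the normal closure of $\{g_{i_0},h_{j_0}\}$.

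Next I would pass to the universal torsion-free image. The key auxiliary fact is that $\notorsion$ commutes with free products: since by the Kurosh subgroup theorem every torsion element of a free product is conjugate into one of the factors, the free product of torsion-free groups is torsion-free, and a short universal-property argument then yields $\notorsion(A*B)\cong\notorsion(A)*\notorsion(B)$. Combined with $\notorsion(F_2)=F_2$ this gives
\[\assocgroup(M)\;=\;\notorsion(\overline{G})\;\cong\;F_2\,*\,\notorsion(\overline{K}).\]
Quotienting both sides by $\langle\!\langle g_{i_0},h_{j_0}\rangle\!\rangle=\langle\!\langle a,b\rangle\!\rangle$ kills the $F_2$ factor and identifies $\mathrm{core}_{(i_0,j_0)}(M)$ with $\notorsion(\overline{K})$, proving the second assertion for the arbitrary pair $(i_0,j_0)$.

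The first assertion then follows formally. For any two pairs one obtains $F_2*\mathrm{core}_{(i_0,j_0)}(M)\cong\assocgroup(M)\cong F_2*\mathrm{core}_{(i_1,j_1)}(M)$, and since $\assocgroup(M)$ is finitely generated so is each core; uniqueness of the Grushko decomposition (which determines the free rank and the unordered set of non-cyclic freely indecomposable factors up to isomorphism) then forces the two cores to be isomorphic. The main technical point is the interchange of $\notorsion$ with free products, which relies on Kurosh; the combinatorial substitution and the appeal to Grushko's theorem are routine once that is in hand.
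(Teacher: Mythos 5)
Your proof is correct, but it is structured quite differently from the paper's, which argues entirely by Tietze transformations and in the opposite order. The paper first proves that all core subgroups are isomorphic \emph{directly}: it takes the standard presentations of the $(1,1)$- and $(2,1)$-core subgroups and transforms both into a common presentation by adjoining generators of the form $a_i = g_2^{-1}g_i$; symmetry then gives the general case. Only afterwards does it prove the free-product statement, via a second Tietze argument whose substitution $a_i = x g_i$, $b_j = y h_j$ is essentially your change of variables $g_i = a x_i$, $h_j = y_j b$ --- so on that half the two proofs coincide in substance. Where you genuinely diverge is in the supporting machinery: you derive the splitting $\overline{G}\cong F_2 * \overline{K}$ at the level of the raw presentation and then invoke (i) the fact that $\notorsion$ commutes with free products (torsion theorem for free products plus the universal property) and (ii) uniqueness of the Grushko decomposition, to cancel the $F_2$ factor and conclude that any two cores are isomorphic. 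Point (i) is a real merit of your write-up: since the core subgroup is defined as a quotient of $\notorsion(\overline{G})$ rather than as the torsion-free image of a quotient, one must check that the presentation-level manipulations are compatible with passing to the universal torsion-free image; your commutation lemma makes this explicit, whereas the paper leaves it implicit in what the relator set $R$ of its ``standard presentations'' is taken to contain. Conversely, the paper's route is more elementary and self-contained --- no Kurosh, no Grushko --- and produces explicit isomorphisms between the cores. Two minor corrections to your argument: in the Grushko decomposition the distinguished factors are the freely indecomposable groups that are not \emph{infinite} cyclic (finite cyclic factors are allowed), so your parenthetical ``non-cyclic'' is slightly misstated; and the cancellation of the $F_2$ factor genuinely needs finite generation, which you rightly justified by noting that the cores are quotients of the finitely generated group $\assocgroup(M)$.
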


\begin{proof}
We show that the~$(1,1)$-core subgroup is isomorphic to the~$(2,1)$-core subgroup. By symmetry this shows that all core subgroups are isomorphic. We show this by using Tietze transformations.
Let~$\langle g_1,\ldots,g_n,h_1,\ldots,h_m \mid g_1,h_1,R \rangle$ be the standard presentation for the~$(1,1)$-core subgroup and
let~$\langle a_1,\ldots,a_n,h_1,\ldots,h_m \mid a_2,h_1,R'\rangle$ be the standard presentation for the~$(2,1)$-core subgroup.
With Tietze transformations, we will transform both presentations to the same common presentation.
We add to the first presentation generators~$a_1,\ldots,a_n$ and relators~$a_i = g_2^{-1} g_i$ for all~$i\in\{1,\ldots,n\}$. We then add all relations from~$R'$, which are satisfied by construction of the core subgroups.
To the second presentation we add all generators~$g_1,\ldots,g_n$ and relators~$a_i = a_1^{-1} g_i$ for all~$i\in \{1,\ldots,n\}$. We then add all relations from~$R$, which are satisfied again by construction of the core subgroups.
It suffices now to realize that~$a_1 = g_2^{-1} g_1$ (or equivalently~$a_2 = a_1^{-1} g_2$) implies~$a_1 = g_2^{-1}$ in both groups and therefore the presentations are equivalent.

To show the second statement, let~$\langle g_1,\ldots,g_n,h_1,\ldots,h_m \mid g_1,h_1,R \rangle$ be the standard presentation for the~$(1,1)$-core subgroup. Then~$\langle x,g_1,\ldots,g_n,y,h_1,\ldots,h_m \mid g_1,h_1,R \rangle$ is a presentation of the free product of the core subgroup with the free group of rank 2.
Let~$\langle a_1,\ldots,a_n,b_1,\ldots,b_m \mid R' \rangle$ be the standard presentation of~$\assocgroup(M)$. A similar Tietze transformation technique as above, namely introducing adequate generators and the relators~$a_i = x g_i$ and~$b_j = y h_j$, shows that these two presentations present isomorphic groups.
\end{proof}

\begin{definition}
Let~$M$ be an~$n\times m$ partially matched rectangle and~$p \in \{1,\ldots,n\}\times \{1,\ldots,m\}$. The~\emph{cyclic closure} of~$p$ is the unique pair of minimal subsets~$(A,B)$ with~$A\subseteq \{1,\ldots,n\}$ and~$B\subseteq \{1,\ldots,m\}$ such that the following holds:

\begin{enumerate}
\item $p \in A\times B$ and
\item if~$(i,j) \in A \times B$ and~$(i',j')$ is paired to~$(i,j)$ then~$i' \in A$ and~$j' \in B$ or~$i' \notin A$ and~$j' \notin B$.
\end{enumerate}
\end{definition}

The term cyclic closure stems from the fact that the sets~$A$ and~$B$ generate certain cyclic subgroups of the core subgroup.

\begin{lemma}\label{lem:exhaustive:means:degenerate}
Let~$M$ be an~$n\times m$ matched rectangle. Suppose~$p \in \{1,\ldots,n\}\times \{1,\ldots,m\}$ and~$(A,B)$ is the cyclic closure of~$p$. If~$A = \{1,\ldots,n\}$ or~$B = \{1,\ldots,m\}$ then~$M$ is degenerate. 
\end{lemma}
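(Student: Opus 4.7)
The plan is to argue that the hypothesis already forces $p$'s matching partner to share a row or a column with $p$, and then to exhibit $M$ as degenerate by reading off a single defining relation — without any further use of cyclic closures or of the core subgroup.

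By the symmetric roles of $A$ and $B$, I would treat only the case $A = \{1,\ldots,n\}$. Write $p = (i_0, j_0)$. First I would analyse when the singleton $(\{i_0\},\{j_0\})$ is itself a valid candidate for the cyclic closure. Condition~(1) is trivial, and condition~(2) is meaningful only at $p$, whose partner $(i_1,j_1)$ must either lie inside $\{i_0\}\times\{j_0\}$ — impossible, since no cell can be matched to itself — or differ from $p$ in both coordinates. Thus $(\{i_0\},\{j_0\})$ is a valid candidate exactly when $i_1 \neq i_0$ and $j_1 \neq j_0$, in which case it would equal the cyclic closure by minimality.

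Next I would rule this scenario out under the hypothesis. If $n \geq 2$ then $|A| = n \geq 2 > 1$, so the cyclic closure strictly exceeds the singleton; if $n = 1$ then $i_1 = 1 = i_0$ automatically. In either situation, $p$'s partner must share a row or a column with $p$. A same-row match $(i_0,j_0) \leftrightarrow (i_0,j_1)$ with $j_1 \neq j_0$ yields the defining relation $g_{i_0}h_{j_0} = g_{i_0}h_{j_1}$, hence $h_{j_0} = h_{j_1}$ in $\assocgroup(M)$; the same-column case is symmetric. Either way $M$ is degenerate.

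I do not foresee any real obstacle. The only point that deserves brief attention is checking that the cyclic closure is well-defined: the class of pairs satisfying~(1) and~(2) is closed under coordinate-wise intersection, by a short case analysis on which of the four in/out combinations each partner falls into, so a unique minimum exists.
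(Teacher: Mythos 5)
Your argument is internally consistent with the literal wording of the paper's definition, and your closing remark (candidate pairs are closed under coordinate-wise intersection, so a unique minimum exists) is correct. But what you have proved is a vacuous reading of the lemma, not the statement the paper actually needs and uses. The defect you are exploiting sits in the written definition itself: since condition~(2) allows the partner of a cell to lie ``fully outside'' $A\times B$, the singleton $(\{i_0\},\{j_0\})$ is a valid candidate whenever $p$'s partner differs from $p$ in both coordinates, and so, exactly as you observe, the hypothesis $A=\{1,\ldots,n\}$ could only ever hold when $p$ is matched inside its own row or column (or $n=1$). That reading cannot be the intended one: the paper's own proof begins by adjoining the partner $p'$ and its generators (setting $g=h^{-1}$ in the core subgroup); the caption of Figure~\ref{fig:3:4:cycle:subcase:1} asserts that the cyclic closure of $(1,1)$ spans all rows even though $(1,1)$ is matched to $(2,2)$; and every later application of the lemma (the triangle case of Figure~\ref{fig:3:3:subcase:1}, the $4\times 4$ analysis in Lemma~\ref{lem:4:times:4}) invokes it for a cell whose partner lies in a different row and a different column. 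The intended condition~(1) is that $A\times B$ contain both $p$ and its matching partner; under that reading the singleton is never a candidate, your dichotomy ``the partner shares a row or a column with $p$'' is simply false, and your proof collapses.

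What the lemma really requires --- and what the paper supplies --- is a group-theoretic argument, not a one-relation observation. Pass to the $p$-core subgroup, where $g_{i_0}=h_{j_0}=1$ and the partner's generators satisfy $g_{i_1}=h_{j_1}^{-1}$. An induction along the closure process shows that every generator of a row in $A$ and of a column in $B$ lies in the cyclic subgroup $\langle g_{i_1}\rangle$: whenever $g_ih_j=g_{i'}h_{j'}$ and three of the four elements lie in $\langle g_{i_1}\rangle$, so does the fourth. If now $A=\{1,\ldots,n\}$, the closure property forces the matching to restrict to a perfect matching of $A\times B$ (a partner can no longer be ``fully outside''), so $\bigl(\sum_{i=1}^n g_i\bigr)\bigl(\sum_{j\in B}h_j\bigr)=0$ in $\mathbb{F}_2[\langle g_{i_1}\rangle]$. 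Since $\assocgroup(M)$ is torsion-free, $\langle g_{i_1}\rangle$ is trivial or infinite cyclic, and $\mathbb{F}_2[\mathbb{Z}]$ is an integral domain; hence one of the two factors vanishes over $\mathbb{F}_2$, which means two of the $g_i$ or two of the $h_j$ coincide, i.e., $M$ is degenerate. None of this machinery appears in your proposal, and no argument that never looks beyond $p$ and its single partner can substitute for it.
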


\begin{proof}
By symmetry, it suffices to show the lemma under the assumptions that the cyclic closure~$(A,B)$ of~$p$ satisfies~$A = \{1,\ldots,n\}$. 
Consider the~$p$-core subgroup. Suppose~$p$ is matched to~$p'$ and let~$(g,h)$ be the pair of generators of the standard representation of the~$p$-core subgroup corresponding to~$p'$. By construction~$g = h^{-1}$. We show that the group generated by~$g$ contains all generators~$g_1,\ldots,g_n$ and all generators that correspond to columns in~$B$. To show this by induction, it suffices to observe that if three of the elements in~$\{g_i,g_{i'},h_j,h_{j'}\}$ are in the group generated by~$g$ and~$g_i h_j = g_{i'} h_{j'}$ then all elements in~$\{g_i,g_{i'},h_j,h_{j'}\}$ are in the group generated by~$g$. 
If the cyclic closure~$(A,B)$ satisfied~$A = \{1,\ldots,n\}$, then the group generated by the rows and columns of this cyclic closure would be cyclic and its group ring would contain non-trivial zero divisors, which gives a contradiction.
\end{proof}

\section{Solvable Baumslag-Solitar groups}\label{sec:BS:groups}

Since~$\mathbb{Z}$ is orderable, it fulfills the Zero Divisor Conjecture. The only other groups for which we will need the validity of the conjecture in this paper, are the solvable Baumslag-Solitar groups. The Baumslag-Solitar group~$BS(m,n)$ is the group given by the presentation
\[\langle a,b \mid ba^mb^{-1} = a^n\rangle.\]

The Baumslag-Solitar groups~\cite{MR0142635} are HNN-extensions. Among them are the first known examples of non-Hopfian groups. A Baumslag-Solitar group is solvable if and only if~$|m| = 1$ or~$|n| = 1$.

We argue that every quotient of a solvable Baumslag-Solitar group~$BS(1,n)$ satisfies the Zero Divisor Conjecture over~$\mathbb{F}_2$. This can also be seen from the general theorem by Kropholler, Linnell, and Moody~\cite{MR964842}, which in particular shows that solvable groups satisfy the conjecture.

\begin{theorem}\label{thm:bsg:groups}
Suppose~$n\in \mathbb{Z}$ and~$G$ is a quotient group of~$BS(1,n)$. The group ring~$\mathbb{F}_2[G]$ does not have non-trivial zero divisors.
\end{theorem}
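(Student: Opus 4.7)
The plan is to exploit the metabelian structure of $BS(1,n)$ and reduce to an abelian-by-cyclic setting where classical skew-polynomial arguments apply. (Since $\mathbb{F}_2[G]$ has non-trivial zero divisors whenever $G$ has torsion, I read the statement with the implicit torsion-free hypothesis, consistent with how the theorem will be applied to the torsion-free groups $\assocgroup(M)$ constructed in Section~\ref{sec:matched:recs}.)

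First I would identify $BS(1,n) \cong A_0 \rtimes \langle b\rangle$, where $A_0$ is the normal closure of $a$, isomorphic to $\mathbb{Z}[1/n]$ for $n \neq 0$, and conjugation by $b$ acts on $A_0$ by multiplication by $n$. For any quotient $G$ of $BS(1,n)$, the image $A$ of $A_0$ is a normal abelian subgroup, and $G/A$ is cyclic, being a quotient of $\langle b\rangle \cong \mathbb{Z}$. Under the torsion-free assumption, $A$ is itself torsion-free abelian.

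Second, I would argue that either $G = A$ or $G \cong A \rtimes \mathbb{Z}$. If $G/A$ had finite order $k \geq 2$, then $w := b^k$ would lie in $A$, and the two identities $bwb^{-1} = w^n$ (from the semidirect-product action descending to $A$) and $bwb^{-1} = w$ (since $b$ commutes with $b^k$) give $w^{n-1} = 1$. For $n \neq 1$, torsion-freeness of $A$ forces $w = 1$, so $b$ has finite order in $G$ and hence $b = 1$, reducing to $G = A$ abelian. The boundary case $n = 1$ gives $BS(1,1) \cong \mathbb{Z}^2$, whose torsion-free quotients are free abelian and handled directly; the case $n = 0$ collapses $BS(1,0)$ to $\mathbb{Z}$. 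When $G/A \cong \mathbb{Z}$ the extension splits, since $\mathbb{Z}$ is free.

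Finally, I would invoke the classical fact that $\mathbb{F}_2[A]$ is a commutative integral domain for $A$ torsion-free abelian (via bi-orderability of such $A$, or via embedding $A$ into $\mathbb{Q}^d$ and observing that $\mathbb{F}_2[A]$ sits inside a Laurent polynomial ring). In the abelian case this finishes the proof. In the case $G \cong A \rtimes \mathbb{Z}$, the group ring $\mathbb{F}_2[G]$ is isomorphic to the skew Laurent polynomial ring $\mathbb{F}_2[A][t,t^{-1};\sigma]$, where $\sigma$ is the automorphism induced by conjugation by $b$. Embedding $\mathbb{F}_2[A]$ into its fraction field $K$ yields an inclusion $\mathbb{F}_2[G] \hookrightarrow K[t,t^{-1};\tilde{\sigma}]$, a skew Laurent polynomial ring over a field, which is a (two-sided) Ore domain and in particular has no zero divisors. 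Pulling this back to $\mathbb{F}_2[G]$ finishes the argument.

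The main obstacle will be the structural reduction in the second step: carefully using torsion-freeness together with the action formula to eliminate finite cyclic quotients, treating the borderline values $n \in \{-1,0,1\}$ uniformly with the generic case. The subsequent skew-polynomial embedding is routine, though one must verify that $\sigma$ extends to an actual automorphism of $K$ (rather than merely an endomorphism) so that the skew Laurent construction, and hence the Ore-domain conclusion, is valid.
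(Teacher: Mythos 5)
Your proof is correct, and your torsion-free reading of the statement is the right one (the paper's own proof treats quotients with torsion as satisfying the conjecture vacuously). However, your route differs from the paper's both in how the quotients are organized and in the machinery invoked. The paper splits into two cases: for a \emph{proper} quotient $G$ of $BS(1,n)$ with $n\notin\{0,1\}$, it observes that any new relation $b^ia^kb^\ell=1$ forces $a^{-k}=b^{i+\ell}$, whence $a^{-k}=ba^{-k}b^{-1}=a^{-nk}$, so $a$ or $b$ becomes torsion or $G$ is cyclic; for $G=BS(1,n)$ itself it uses the homomorphism $\phi\colon BS(1,n)\to\mathbb{Z}$ with $\phi(b)=1$, $\phi(a)=0$, restricts $\alpha$ and $\beta$ to the top $\phi$-level of their supports, and concludes that the kernel $\mathbb{Z}[1/n]$ would have zero divisors in its group ring --- impossible, since the finite supports generate a finitely generated, hence infinite cyclic, subgroup of $\mathbb{Z}[1/n]$. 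Your skew-Laurent degree argument is exactly this top-level argument in ring-theoretic packaging (degree in $t$ corresponds to $\phi$), but you apply it uniformly to every torsion-free quotient via the dichotomy $G=A$ or $G\cong A\rtimes\mathbb{Z}$, rather than only to $BS(1,n)$ itself. What your approach buys: uniformity and generality --- it proves the statement for any torsion-free group of the form (torsion-free abelian)$\,\rtimes\,\mathbb{Z}$, and your elimination of finite cyclic quotients $G/A$ via $w^{n-1}=1$ is cleaner than the paper's relation-chasing. What the paper's approach buys: it is more elementary and self-contained, needing only that $\mathbb{Z}$ is orderable, and it avoids fraction fields, splitting of extensions, and orderability of general torsion-free abelian groups; in your setting that last appeal is in any case harmless, since $A$, being a torsion-free quotient of $\mathbb{Z}[1/n]$, is trivial or isomorphic to $\mathbb{Z}[1/n]\leq\mathbb{Q}$, so $\mathbb{F}_2[A]$ is a domain by an elementary argument. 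Both proofs are sound; yours is the more standard textbook argument, the paper's the more minimal one.
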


\begin{proof}
For~$n=0$ and~$n= 1$ the group and all its factors are Abelian and thus if it is torsion-free, the group ring~$\mathbb{F}_2$ cannot have zero divisors. We thus suppose~$n\notin \{0,1\}$.
Every element in~$BS(1,n)$ can be written as~$b^i a^k b^{\ell}$ with~$i,k,\ell \in \mathbb{Z}$. 
Suppose~$G$ is a proper quotient of~$BS(1,n)$. This means that there are integers~$i,k,\ell \in \mathbb{Z}$ with~$i + \ell \neq 0$ or~$k\neq 0$ such that~$b^i a^k b^l = 1$ in~$G$, but$~b^i a^k b^{\ell} \neq 1$ in~$BS(1,n)$. Thus~$a^{-k} =  b^{i+\ell}$ in~$G$ with~$k\neq 0$ or~$i+\ell \neq 0$. If~$k = 0$ or~$i+\ell = 0$ then~$a$ or~$b$ is a torsion element. Otherwise~$a^{-k} = b^{i+\ell} = b b^{i+\ell} b^{-1} = b a^{-k} b^{-1} = a^{-nk}$ which, since~$n\neq 1$, shows that~$a$ is a torsion element. Thus~$G$ is cyclic or contains torsion. In any case it fulfills the conjecture. 

It remains to consider the case in which~$G = BS(1,n)$. Suppose~$\alpha,\beta \in \mathbb{F}_2[BS(1,n)]\setminus \{0\}$ with~$\alpha  \beta = 0$. 
Suppose further~$\alpha = g_1+\ldots+g_{n'}$ and~$\beta = h_1+\ldots+h_{m'}$.
Consider the homomorphism~$\phi$ from~$BS(1,n)$ to~$\mathbb{Z}$ that sends~$b$ to~$1$ and~$a$ to~$0$. W.l.o.g. we may assume that~$\max\{\phi(g_i) \mid i \in \{1,\ldots,n'\} \} = \max\{\phi(h_j) \mid j \in \{ 1,\ldots,m'\} \} = 0$. 
Let~$A = \phi^{-1}(0) \cap \supp(\alpha)$ and~$B = \phi^{-1}(0) \cap \supp(\beta)$. 
Then~$(\sum_{g\in A} g)  (\sum_{h\in B} h) = 0$ showing that the group ring over the kernel of~$\phi$ contains zero divisors, but this kernel is~$\mathbb{Z}[1/n]$ and every finitely generated subgroup of it is isomorphic to~$\mathbb{Z}$, which gives a contradiction.
\end{proof}

\section{Combinatorial considerations}\label{sec:by:hand}

Since an~$n\times m$ matched rectangle in particular requires a matching on a set of size~$n m$, there are no~$n\times m$ matched rectangles if both~$n$ and~$m$ are odd. By Theorem~\ref{thm:length:2}, if~$n\leq 2$ or~$m\leq 2$ then every~$n\times m$ matched rectangle is degenerate. We thus turn to longer lengths, where at least one of the integers~$n$ and~$m$ is even.

\subsection[Degeneracy of 3 x m matched rectangles]{Degeneracy of $3\times m$ matched rectangles}

Let~$M$ be a~$3\times m$ matched rectangle that is not degenerate. Since~$M$ is not degenerate, no two positions that lie in the same column are matched to each other. 
Suppose two matchings in~$M$ match the same columns~$c_1$ and~$c_2$, i.e., position~$(i,c_1)$  is matched to~$(j,c_2)$ and~$(i',c_1)$  is matched to~$(j',c_2)$  with~$i,i',j,j'\in \{1,2,3\}$ and~$i\neq i'$. Then~$M$ contains~$({M_3^{\cyc}})^T$, the transpose of~${M_3^{\cyc}}$, as a sub-rectangle, and is therefore degenerate (Lemma~\ref{lem:exhaustive:means:degenerate}). This observation allows us to associate a cubic graph with every non-degenerate~$3\times m$ matched rectangle.

\begin{definition}
Let~$M$ be a non-degenerate~$3\times m$ matched rectangle. Then~$\assocgraph(M)$, the \emph{graph underlying~$M$}, is the graph on the vertex set~$\{1,\ldots,m\}$ that has an edge from~$c_1$ to~$c_2$ if there is an edge that matches a position in column~$c_1$ to a position in column~$c_2$.
\end{definition}

\begin{theorem}
Let~$M$ be a non-degenerate~$3\times m$ matched rectangle. The graph~$\assocgraph(M)$, underlying~$M$, is a simple cubic graph that does not have a triangle.
\end{theorem}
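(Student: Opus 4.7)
The plan is to verify in turn the three assertions of the theorem --- that $\assocgraph(M)$ is cubic, that it is simple, and that it is triangle-free --- with the last being the substantive new content and the main obstacle.

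For cubicness I would count degrees. Each column has three positions, each matched to exactly one other position. A loop, i.e.\ a match of two positions within the same column, would give $g_i h_c = g_{i'} h_c$ and hence $g_i = g_{i'}$, contradicting non-degeneracy. So each of the three matches at a column $c$ connects $c$ to some other column, and $c$ has three edges incident in $\assocgraph(M)$ provided these three edges go to three distinct neighbours. That last proviso is exactly the no-multi-edges property, which has already been treated in the paragraph preceding the definition of $\assocgraph(M)$: two matches between the same pair of columns embed $({M_3^{\cyc}})^T$ as a sub-rectangle and hence, by Lemma~\ref{lem:exhaustive:means:degenerate}, force $M$ to be degenerate. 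These two observations together give simplicity (also formally built into the definition) and cubicness.

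For triangle-freeness I would argue by contradiction. Suppose $c_1, c_2, c_3$ span a triangle in $\assocgraph(M)$. In each column the two triangle-edge positions occupy some 2-element subset of $\{1,2,3\}$, and since there are only three such subsets, two exhaustive cases arise: all three columns use the same row pair (\textbf{Case A}), or the three row pairs are pairwise distinct (\textbf{Case B}).

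In Case A, say all three columns use rows $\{1,2\}$. By the same cancellation argument as in the no-loop step, no triangle edge can pair two positions in the same row (else two $h_{c_j}$'s would coincide). Hence the three triangle edges define a fixed-point-free bijection from the row-$1$ positions of $\{c_1, c_2, c_3\}$ to their row-$2$ positions, i.e.\ a derangement $\sigma$ of $\{c_1, c_2, c_3\}$. The only derangement of three elements is a $3$-cycle, so telescoping the three relations $g_1 h_{c_i} = g_2 h_{c_{\sigma(i)}}$ around the cycle yields $(g_2^{-1} g_1)^3 = 1$, and torsion-freeness of $\assocgroup(M)$ then forces $g_1 = g_2$, contradicting non-degeneracy.

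In Case B I may assume, after renaming rows and columns, that $c_1, c_2, c_3$ use row pairs $\{1,2\}, \{1,3\}, \{2,3\}$ respectively. Working in the $(1, c_1)$-core subgroup, where $g_1 = h_{c_1} = 1$, I would use two of the three triangle relations to solve for $h_{c_2}$ and $h_{c_3}$ as explicit words in $g_2$ and $g_3$, and read the remaining triangle relation as a single defining identity on $g_2, g_3$. A brief enumeration of the legal orientations of the triangle edges shows this identity is always of the form $g_2^{-1} g_3 g_2 = g_3^{\varepsilon}$ with $\varepsilon \in \{-1, +1\}$, or the equivalent $g_2^2 = g_3^2$; each of these presents a solvable Baumslag--Solitar group $BS(1, n)$ with $n \in \{-1, +1\}$ (the $+1$ case being $\mathbb{Z}^2$ and $g_2^2 = g_3^2$ being an alternative presentation of the Klein bottle group $BS(1,-1)$). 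Hence the core subgroup is a quotient of such a $BS(1, n)$, and by Theorem~\ref{thm:bsg:groups} its group ring over $\mathbb{F}_2$ has no non-trivial zero divisors. Combined with the free-product decomposition of $\assocgroup(M)$ as the free product of the core subgroup with a rank-$2$ free group (from the earlier lemma) and the amalgamated free product theorem of Lewin referenced in the introduction, specialised to the trivial amalgamating subgroup, this transfers to $\mathbb{F}_2[\assocgroup(M)]$ itself. But by construction $\alpha_M \beta_M = 0$ with both factors non-trivial by non-degeneracy, a contradiction.

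The main obstacle is Case B: checking that every legal orientation of the triangle produces a $BS(1, \pm 1)$-type identity is a patient but elementary enumeration, and the final step has to appeal to the classical preservation of the zero-divisor property under free products of torsion-free groups, which is the least self-contained part of the argument. Case A, by contrast, is quick once the $3$-cycle observation is made.
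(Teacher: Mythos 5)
Your case analysis for triangle-freeness is not exhaustive, and the omitted case is a genuine one. With three columns each occupying a two-element subset of the three rows, the possibilities are: all three subsets equal, all three pairwise distinct, or \emph{exactly two equal}; your dichotomy (Case A / Case B) silently discards the third possibility. That case is realizable under every constraint you impose: for instance, match $(1,c_1)$ to $(2,c_2)$, $(1,c_2)$ to $(2,c_3)$, and $(3,c_1)$ to $(1,c_3)$, so that columns $c_2$ and $c_3$ use the row pair $\{1,2\}$ while $c_1$ uses $\{1,3\}$. No match here lies within a single row or a single column, so neither your cancellation step nor your derangement argument applies; and since all three rows occur, there is no completely matched $2\times 3$ sub-rectangle either. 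This configuration is precisely the one in Figure~\ref{fig:3:3:subcase:1} of the paper, one of its three subcases. It produces neither a torsion relation nor a Baumslag--Solitar relation: in the $(1,c_1)$-core subgroup the three triangle relations give $h_{c_2}=g_2^{-1}$, $h_{c_3}=g_2^{-2}$ and $g_3=g_2^{-2}$, i.e.\ they force a \emph{cyclic} core, and one needs a separate argument (the paper uses the cyclic-closure Lemma~\ref{lem:exhaustive:means:degenerate}; equivalently, that $\mathbb{F}_2[\mathbb{Z}]$ has no zero divisors) to conclude degeneracy. Your proof as written does not cover this configuration at all.

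A secondary issue: in Case B you read the remaining triangle relation as ``a single defining identity on $g_2,g_3$,'' i.e.\ you treat the core subgroup as a quotient of the two-generator group so presented. That presupposes that the images of $g_2$ and $g_3$ generate the whole core, which is exactly what the paper's Lemma~\ref{lem:generators:of:rectangle} supplies (and which requires the absence of proper matched sub-rectangles); without it, the columns of $M$ outside the triangle are unaccounted for. This step is fillable, and your final transfer of the no-zero-divisor property from the core to $\assocgroup(M)$ via the free-product decomposition and Lewin's theorem is legitimate (arguably cleaner than the paper's implicit argument). But together with the missing third case above, the proof as it stands has a real hole: the paper's treatment of that case is not a variant of your Cases A and B, it is a genuinely different degeneracy mechanism.
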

\begin{proof}
\begin{figure}
\centering
 \subfloat[A~$3\times 3$ rectangle with cyclic core subgroup.]{
\begin{tikzpicture}[x=\celldist, y=\celldist, node distance=0 cm,outer sep = 0pt]
\recblocks{3}{3}
\foreach \x/\y in { {(1,-1)/(2,-2)}, {(2,-1)/(3,-2)},{(1,-3)/(3,-1)}}
{
      \draw[<->] \x -- \y {};
}
\label{fig:3:3:subcase:1}
\end{tikzpicture}
}
\quad \quad
 \subfloat[A~$3\times 3$ rectangle with a core subgroup isomorphic to~$BS(1,1)$.]{
\begin{tikzpicture}[x=\celldist, y=\celldist, node distance=0 cm,outer sep = 0pt]
\recblocks{3}{3}
\node[top] at (0,-1) {$1$}; 
\node[top] at (0,-2) {$\overline{g}$}; \node[top] at (0,-3) {$x$}; 
\node[top] at (1,0) {$1$}; 
\node[top] at (2,0) {$g$};\node[top] at (3,0) {$h$}; 

\foreach \x/\y in { {(1,-1)/(2,-2)}, {(2,-3)/(3,-1)},{(3,-2)/(1,-3)}}
{
      \draw[<->] \x -- \y {};
}
\label{fig:3:3:subcase:2}
\end{tikzpicture}
}
\quad \quad
 \subfloat[A~$3\times 3$ rectangle with a core subgroup isomorphic to~$BS(1,-1)$.]{
\begin{tikzpicture}[x=\celldist, y=\celldist, node distance=0 cm,outer sep = 0pt]
\recblocks{3}{3}

\node[top] at (0,-1) {$1$}; 
\node[top] at (0,-2) {$\overline{g}$}; \node[top] at (0,-3) {$x$}; 
\node[top] at (1,0) {$1$}; 
\node[top] at (2,0) {$g$}; \node[top] at (3,0) {$h$};

\foreach \x/\y in { {(1,-1)/(2,-2)}, {(2,-1)/(3,-3)},{(3,-2)/(1,-3)}}
{
      \draw[<->] \x -- \y {};
}
\label{fig:3:3:subcase:3}
\end{tikzpicture}
} \caption{Several~$3\times 3$ partially matched rectangles whose associated graph is a triangle. Here and in all following figures~$\overline{g}$ denotes~$g^{-1}$.}\label{fig:proof:3:3}
\end{figure}
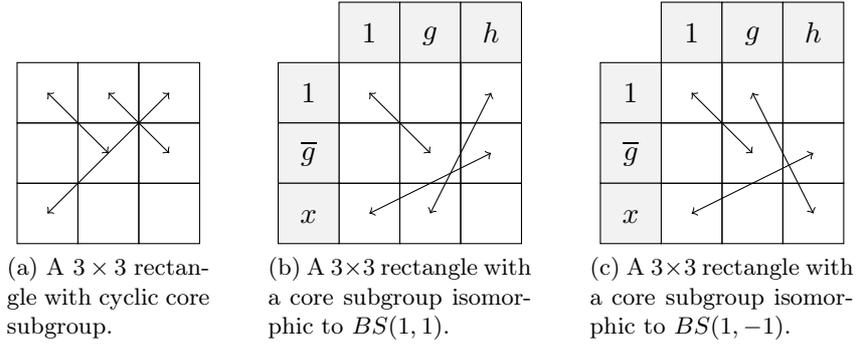
Let~$M$ be a non-degenerate~$3\times m$ matched rectangle.
By the arguments given before the definition, the graph~$\assocgraph(M)$ is simple and cubic. Suppose that the graph has a cycle of length 3.
Then~$M$ has a~$3\times 3$ sub-rectangle~$M'$ with three matching edges that form a 3-cycle. Since~$M'$ is not degenerate,~$M'$ does not contain a completely matched sub-rectangle. Up to isomorphism there are 3 possibilities for~$M'$, all depicted in Figure~\ref{fig:proof:3:3}. However,~$M'$ cannot be the rectangle shown in Figure~\ref{fig:3:3:subcase:1} by Lemma~\ref{lem:exhaustive:means:degenerate}. Suppose~$M'$ is isomorphic to the rectangle shown in Figure~\ref{fig:3:3:subcase:2}. We consider the~$(1,1)$-core subgroup of~$\assocgroup(M)$ by setting~$g_1 = 1$ and~$h_1 = 1$. We define~$g:= h_2$,~$h:= h_3$ and~$x:= g_3$. Since~$(1,1)$ is matched to~$(2,2)$, (i.e.,~$1  = g_2 \cdot h_2$) this implies~$g_2 = \overline{g}= g^{-1}$. Since~$(3,1)$ is matched to~$(2,3)$ we conclude that~$x = \overline{g} h$.
Since~$(3,2)$ is matched to~$(1,3)$ we conclude that~$xg = h$. Combining the two equations implies that~$\overline{g}hg = h$. Since~$x$ lies in the subgroup generated by~$g$ and~$h$, by Lemma~\ref{lem:generators:of:rectangle},~$g$ and~$h$ generate~$\assocgroup(M)$. Since~$\overline{g}hg = h$ is the only relation in the Baumslag-Solitar group~$BS(1,1)$, the group~$\assocgroup(M)$ is a factor group of the group~$BS(1,1)$ and thus the rectangle~$M$ is degenerate by Theorem~\ref{thm:bsg:groups}.

Suppose now that~$M'$ is isomorphic to the partially matched rectangle shown in Figure~\ref{fig:3:3:subcase:3}. By a consideration analogous to the previous case, we conclude that~$x = \overline{g} h$ and~$xh = g$. Thus~$\overline{g}h^2 = g$, which implies~$h^2 = g^2$. Again, by Lemma~\ref{lem:generators:of:rectangle},~$g$ and~$h$ generate~$\assocgroup(M)$. Recalling that the presentation~$\langle a,b \mid a^2 = b^2\rangle$ is an alternative presentation of the fundamental group of the Klein bottle, and thus a group isomorphic to~$BS(1,-1)$, we conclude that~$M$ is degenerate.
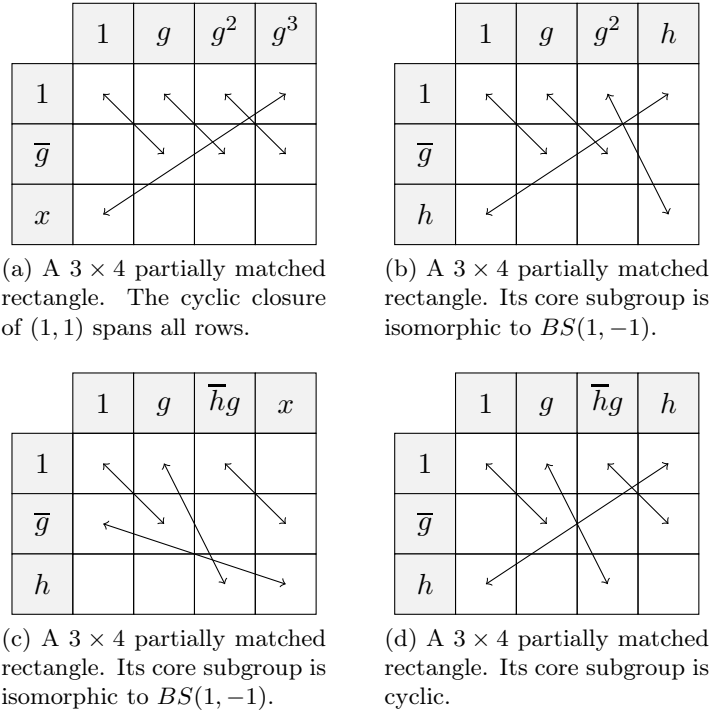
\begin{figure}
\centering
 \subfloat[A~$3\times 4$ partially matched rectangle. The cyclic closure of~$(1,1)$ spans all rows.]{
\begin{tikzpicture}[x=\celldist, y=\celldist, node distance=0 cm,outer sep = 0pt]
\recblocks{4}{3}
\node[top] at (0,-1) {$1$};  \node[top] at (0,-2) {$\overline{g}$}; \node[top] at (0,-3) {$x$}; 
\node[top] at (1,0) {$1$};  \node[top ] at (2,0) {$g$}; \node[top] at (3,0) {$g^2$}; \node[top] at (4,0) {$g^3$};
\foreach \x/\y in { {(1,-1)/(2,-2)}, {(2,-1)/(3,-2)},{(3,-1)/(4,-2)},{(4,-1)/(1,-3)}}
{
      \draw[<->] \x -- \y {};
}
\label{fig:3:4:cycle:subcase:1}
\end{tikzpicture}
}
\quad \quad
 \subfloat[A~$3\times 4$ partially matched rectangle. Its core subgroup is isomorphic to~$BS(1,-1)$.]{
\begin{tikzpicture}[x=\celldist, y=\celldist, node distance=0 cm,outer sep = 0pt]
\node[top] at (0,-1) {$1$};  \node[top] at (0,-2) {$\overline{g}$}; \node[top] at (0,-3) {$h$}; 
\node[top] at (1,0) {$1$};  \node[top ] at (2,0) {$g$}; \node[top] at (3,0) {$g^2$}; \node[top] at (4,0) {$h$};
\recblocks{4}{3}
\foreach \x/\y in { {(1,-1)/(2,-2)}, {(2,-1)/(3,-2)},{(3,-1)/(4,-3)},{(4,-1)/(1,-3)}}
{
      \draw[<->] \x -- \y {};
}
\label{fig:3:4:cycle:subcase:2}
\end{tikzpicture}
}
\\
 \subfloat[A~$3\times 4$ partially matched rectangle. Its core subgroup is isomorphic to~$BS(1,-1)$.]{
\begin{tikzpicture}[x=\celldist, y=\celldist, node distance=0 cm,outer sep = 0pt]
\node[top] at (0,-1) {$1$};  \node[top] at (0,-2) {$\overline{g}$}; \node[top] at (0,-3) {$h$}; 
\node[top] at (1,0) {$1$};  \node[top ] at (2,0) {$g$}; \node[top] at (3,0) {$\overline{h}g$}; \node[top] at (4,0) {$x$};
\recblocks{4}{3}
\foreach \x/\y in { {(1,-1)/(2,-2)},{(2,-1)/(3,-3)},{(3,-1)/(4,-2)},{(4,-3)/(1,-2)}}
{
      \draw[<->] \x -- \y {};
}
\label{fig:3:4:cycle:subcase:4}
\end{tikzpicture}
}
\quad \quad
 \subfloat[A~$3\times 4$ partially matched rectangle.  Its core subgroup is cyclic.]{
\begin{tikzpicture}[x=\celldist, y=\celldist, node distance=0 cm,outer sep = 0pt]
\node[top] at (0,-1) {$1$};  \node[top] at (0,-2) {$\overline{g}$}; \node[top] at (0,-3) {$h$}; 
\node[top] at (1,0) {$1$};  \node[top ] at (2,0) {$g$}; \node[top] at (3,0) {$\overline{h}g$}; \node[top] at (4,0) {$h$};
\recblocks{4}{3}
\foreach \x/\y in { {(1,-1)/(2,-2)},{(2,-1)/(3,-3)}, {(3,-1)/(4,-2)},{(4,-1)/(1,-3)}}
{
      \draw[<->] \x -- \y {};
}
\label{fig:3:4:cycle:subcase:5}
\end{tikzpicture}
}

 \caption{Several types of~$3\times 4$ partially matched rectangles.}\label{fig:proof:3:4:cycles}
\end{figure}
\end{proof}

Since there is no triangle-free cubic graph on 4 vertices, we obtain as corollary that every~$3\times 4$ rectangle is degenerate.
\begin{corollary}\label{cor:3:times:4}
Every~$3\times 4$ matched rectangle~$M$ is degenerate.
\end{corollary}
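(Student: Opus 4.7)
The plan is to derive this corollary as an immediate consequence of the preceding theorem, which establishes that the graph $\assocgraph(M)$ underlying any non-degenerate $3\times m$ matched rectangle $M$ is simple, cubic, and triangle-free. First I would suppose for contradiction that $M$ is a non-degenerate $3\times 4$ matched rectangle and consider its underlying graph $\assocgraph(M)$, which has exactly $4$ vertices (one per column) and is by the preceding theorem simple, $3$-regular, and triangle-free.

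Next I would observe that there is no such graph. A simple $3$-regular graph on $4$ vertices has $4\cdot 3/2 = 6$ edges, but any simple graph on $4$ vertices has at most $\binom{4}{2}=6$ edges, so the only candidate is $K_4$, which is saturated with triangles. This contradicts the triangle-freeness guaranteed by the previous theorem, so no non-degenerate $3\times 4$ matched rectangle exists.

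There is essentially no obstacle here: all of the real work has been carried out in the preceding theorem, where the ruling out of double edges (via $({M_3^{\cyc}})^T$ and Lemma~\ref{lem:exhaustive:means:degenerate}) and of triangles (via the case analysis on $3\times 3$ sub-rectangles with a cyclic or Baumslag–Solitar core subgroup, together with Theorem~\ref{thm:bsg:groups}) has already been performed. The corollary is then pure elementary graph theory, and I would present it as a short proof by contradiction with a one-line counting argument showing that the only simple cubic graph on four vertices is~$K_4$.
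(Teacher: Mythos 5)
Your proof is correct and matches the paper's own argument exactly: the paper derives this corollary in one line from the preceding theorem by noting that no triangle-free cubic graph on $4$ vertices exists. Your counting argument identifying $K_4$ as the only candidate simply makes that one-line observation explicit.
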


To show that all~$3\times 6$ matched rectangles are degenerate, we can exploit the fact that there is only one triangle free graph on 6 vertices.

\begin{lemma}\label{lem:3:times:6}
Every~$3\times 6$ matched rectangle~$M$ is degenerate.
\end{lemma}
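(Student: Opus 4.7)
The plan is to combine the graph-theoretic characterization of $\assocgraph(M)$ just established with a case analysis. The preceding theorem shows that if $M$ is a non-degenerate $3\times m$ matched rectangle then $\assocgraph(M)$ is a simple triangle-free cubic graph, and up to isomorphism the only such graph on six vertices is the complete bipartite graph $K_{3,3}$. So I first reduce to the case $\assocgraph(M)\cong K_{3,3}$: the columns split into two classes $C_1=\{1,2,3\}$ and $C_2=\{4,5,6\}$ such that every pair $(c,c')\in C_1\times C_2$ is joined by exactly one matching edge and no matching edges run within either class.

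Next, I describe the matching concretely. At each column $c$, the three incident edges occupy all three rows exactly once, so the matching is encoded by six bijections (one per column) recording, for each pair $(c,c')\in C_1\times C_2$, which row of $c$ and which row of $c'$ are used. I plan to normalize this data using the symmetry group $S_3^{\mathrm{row}}\times S_3^{C_1}\times S_3^{C_2}$ of simultaneous row permutations and intra-class column permutations, in order to reduce to a short list of canonical configurations. A useful preliminary observation to trim the list is that if two distinct pairs $(c,c')\neq(c,c'')$ in $C_1\times C_2$ sharing a column $c$ were to share the same row on the $c$-side, then the bijection at $c$ would fail; and if a $3\times 3$ sub-rectangle on columns $\{c,c',c''\}$ realizes a triangle-containing pattern, the analysis in the preceding theorem already gives degeneracy.

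For each surviving canonical configuration I pass to the $(1,1)$-core subgroup, setting $g_1=h_1=1$. The three edges incident to column~$1$ express $h_4,h_5,h_6$ as words in $g_2,g_3$, so the core group is presented on the four generators $g_2,g_3,h_2,h_3$ with six relations coming from the edges at columns~$2$ and~$3$. In each case I intend to argue that one of two things occurs: either the cyclic closure of some position exhausts all three rows or all six columns, in which event Lemma~\ref{lem:exhaustive:means:degenerate} yields degeneracy, or else a short sequence of Tietze transformations exhibits the core group as a quotient of a solvable Baumslag-Solitar group $BS(1,n)$, in which event Theorem~\ref{thm:bsg:groups} yields degeneracy. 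These computations closely mirror the three-case argument for $3\times 3$ rectangles given in the preceding theorem.

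The main obstacle I expect is keeping the case analysis tractable. Without the symmetry reduction and the sub-rectangle observation above, the number of $K_{3,3}$-matchings is too large to check by hand; the hard part is confirming that after normalization the residual cases genuinely number only a handful, and that in every one of them the remaining relations among $g_2,g_3,h_2,h_3$ fall into one of the two patterns described. If any configuration resists both templates, a finer invariant (for example a closer look at the cyclic subgroup generated by some $g_i^{-1}g_j$ acting on $h_2$ and $h_3$ by conjugation through the edges) would need to be introduced to finish that case.
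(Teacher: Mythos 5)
Your starting point coincides with the paper's: reduce via the preceding theorem to $\assocgraph(M)\cong K_{3,3}$, pass to the $(1,1)$-core subgroup, and use Lemma~\ref{lem:exhaustive:means:degenerate} and Theorem~\ref{thm:bsg:groups} as degeneracy certificates. The gap is that everything after that is a plan rather than an argument: the entire content of the lemma lies in the case analysis you defer, and with the tools you specify it does not reduce to ``a handful'' of cases. Once the graph is~$K_{3,3}$, the matching is exactly one bijection per column (opposite-class neighbors $\to$ rows), i.e.\ $(3!)^6=46656$ configurations, while the symmetry group you propose to quotient by (row permutations, intra-class column permutations, class swap) has order only $6^3\cdot 2=432$, so you are left with over a hundred isomorphism classes, each needing a core-group computation. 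Worse, the one concrete pruning rule you offer --- triangle-containing $3\times 3$ sub-rectangles --- is vacuous at this stage, since after the reduction the column graph is triangle-free by construction. Your two templates are also not sufficient for the cases that actually arise: the paper additionally needs direct torsion relations (e.g.\ $(h^{-1}g)^{2}=1$, which by torsion-freeness forces $g=h$) and proper matched sub-rectangles (a completely matched $2\times 4$ block, degenerate by Theorem~\ref{thm:length:2} via Lemma~\ref{lem:degeneretedness:antihereditary}). You concede this yourself (``if any configuration resists both templates\dots''), which is an admission that the proof is not closed.

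The missing idea that keeps the paper's proof short is a different organization of the case analysis. Instead of enumerating global $K_{3,3}$-matchings up to symmetry, the paper isolates a few forbidden $3\times 4$ \emph{partially} matched configurations --- a two-row staircase ($M_3^{\cyc}$-type) plus a return edge, Figure~\ref{fig:proof:3:4:cycles} --- and shows each is degenerate by exactly the computations you envisage (cyclic closure spanning all rows; quotients of $BS(1,-1)$ and $BS(1,-2)$; a torsion relation). It then exploits the fact that $K_{3,3}$ has no induced path on four vertices to run a forcing argument: normalizing so that $(1,1)$ is matched to $(2,2)$ and $(2,1)$ to $(3,3)$, every way of extending the matching is pushed into a proper matched sub-rectangle, an $M_3^{\cyc}$ wrap-around, or one of the forbidden configurations. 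That chase is what replaces the open-ended enumeration; without it --- or without actually carrying your enumeration to completion --- the lemma is not proved.
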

\begin{proof}
Let~$M$ be a non-degenerate matched~$3\times 6$ rectangle. First observe that~$M$ cannot contain~$M_3^{\cyc}$: there is only one triangle free graph on 6 vertices and this graph does not have an induced path of length 4. If~$M$ were to contain~$M_3^{\cyc}$, say w.l.o.g. in the first four columns, then there must be an additional matching within the first four columns, which shows that either~$M$ has a proper matched sub-rectangle or the cyclic closure of each of the matching edges in the~$M_3^{\cyc}$ sub-rectangle spans all rows (Figure~\ref{fig:3:4:cycle:subcase:1}). Either way, this implies that~$M$ is degenerate.

We now argue that~$M$ also does not contain any of the other matched rectangles shown in Figure~\ref{fig:proof:3:4:cycles}. (In these figures, by considering the core subgroup, all elements on the left have been assigned names as depicted. For the elements on the top, the given product representations then follow from the relations determined by the rectangle.) Suppose~$M$ contained the partially matched rectangle from Figure~\ref{fig:3:4:cycle:subcase:2}. By considering the core subgroup, as indicated in Figure~\ref{fig:3:4:cycle:subcase:2}, we see that~$g^2 = h^{-2}$ and thus~$M$ would be a factor group of~$BS(1,-1)$. If~$M$ contained the partially matched rectangle shown in~Figure~\ref{fig:3:4:cycle:subcase:4} then the relations~$g^{-1}x =h^{-1}g$ and~$g^{-1} = hx$ imply~$hgh^{-1} = g^{-2}$ and we get a similar conclusion using the group~$BS(1,-2)$. Finally, in the case depicted in Figure~\ref{fig:3:4:cycle:subcase:5} we conclude from~$h^{-1}g = g^{-1}h$ that~$(h^{-1}g)^{2}= 1$ and, using torsion-freeness, obtain that~ $g = h$. This implies that in all cases~$M$ is degenerate.

We conclude the proof by showing that~$M$ must contain one of the partially matched rectangles shown in Figure~\ref{fig:proof:3:4:cycles}. By permuting rows and columns, we can achieve that position~$(1,1)$ is matched to position~$(2,2)$, and position~$(2,1)$ is matched to position~$(3,3)$: indeed,~$M$ must contain such a configuration since otherwise it contains a proper sub-rectangle or~$M_3^{\cyc}$ and is thus degenerate. We can also achieve that position~$(3,1)$ is matched to some position in column~4. Since the unique triangle free graph on 6 vertices does not contain induced paths of length~3, some position in column~4 must be matched to some position in column~1. If position~$(3,1)$ were matched to~$(4,2)$ then~$M$ would contain one of the sub-rectangles shown in~Figures~\ref{fig:3:4:cycle:subcase:4} and~\ref{fig:3:4:cycle:subcase:5}. Thus~$(3,1)$ is matched to~$(4,3)$. But then position~$(4,1)$ is not matched to column~1, since this would create one of the forbidden sub-rectangles. W.l.o.g.~$(4,1)$ is matched to column~5. It thus must be matched to~$(5,2)$ since~$M$ contained~$M_3^{\cyc}$ otherwise. By applying the induced path argument again we conclude that~$(5,1)$ must be matched to~$(2,3)$, which creates~$M_3^{\cyc}$.
\end{proof}

\subsection[Degeneracy of 4 x 4 matched rectangles]{Degeneracy of $4\times 4$ matched rectangles}

In the rest of this section we show that every~$4\times 4$ matched rectangle is degenerate. To do so, we first show that a non-degenerate~$3\times 3$ partially matched rectangle can have at most 3 matching edges.

\begin{lemma}\label{lem:3times3:no:4:matchings}
Every~$3\times 3$ partially matched rectangle with four matched positions is degenerate.
\end{lemma}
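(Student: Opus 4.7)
The plan is to use the fact that a $3\times 3$ partially matched rectangle with four matching edges covers all but one of its nine positions. First, by permuting rows and columns I would place the unique unmatched position at~$(3,3)$. Then, using the observation that any matching edge whose two endpoints share a row or a column immediately makes the rectangle degenerate, a short endpoint count pins down the edge distribution: among row pairs $e_{12}=2$ and $e_{13}=e_{23}=1$, and symmetrically among column pairs $f_{12}=2$ and $f_{13}=f_{23}=1$.

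The next step is to split according to the two edges that lie between rows~$1$ and~$2$. Being vertex-disjoint and each connecting two distinct columns, they span either exactly two columns, in which case they form a complete matching on a $2\times 2$ sub-rectangle, or exactly three columns, in which case they form an $M_3^{\cyc}$-type staircase. In the first case the two defining relations $g_1 h_a = g_2 h_b$ and $g_1 h_b = g_2 h_a$ combine to $(g_1^{-1}g_2)^2=1$; since $\assocgroup(M)$ is torsion-free this forces $g_1=g_2$, so the rectangle is degenerate.

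In the staircase case, after applying the symmetries that fix the unmatched corner~$(3,3)$, only a few essentially different sub-configurations remain; in each of them the required column-pair counts together with the no-same-column rule essentially determine the positions of the two remaining edges (one between rows $1,3$ and one between rows $2,3$). For each resulting configuration I would pass to the $(1,1)$-core subgroup by setting $g_1=h_1=1$ and then apply the four relations in sequence to express every remaining generator as a power of a single element $h:=h_2$. Traversing the closed cycle of relations then collapses to a torsion identity $h^{k}=1$ with a small positive exponent~$k$; for example, in the canonical configuration with edges $(1,1)$--$(2,2)$, $(1,2)$--$(2,3)$, $(1,3)$--$(3,1)$, $(2,1)$--$(3,2)$, one obtains $h^{4}=1$. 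Since the core subgroup is torsion-free, this forces $h_2=h_1=1$ and hence two $h$-generators coincide, by Lemma~\ref{lem:degeneretedness:antihereditary} making the whole rectangle degenerate.

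The main obstacle is the symmetry bookkeeping in the staircase case: one must verify that the several possible staircase orientations collapse into only a handful of orbits once the unmatched corner has been fixed at~$(3,3)$, and then confirm in each orbit that the exponent~$k$ coming from the cycle really yields a non-trivial torsion relation rather than an identity which would be trivially satisfied. Once this is done, each sub-case reduces to a short Tietze-style calculation inside the core subgroup.
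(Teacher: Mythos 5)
Your proposal is correct and follows essentially the same route as the paper's proof: two vertex-disjoint edges joining a common pair of rows are found (pigeonhole in the paper, your endpoint count), the $2\times 2$ completion is ruled out by torsion-freeness, and in the staircase case the two remaining edges are forced into the third row, where a core-subgroup computation produces a torsion relation and hence degeneracy. The only organizational difference concerns your acknowledged ``main obstacle'': the paper normalizes the staircase rather than the empty cell---setting $g_1=h_1=1$, $h_2=g$, $g_2=g^{-1}$, $h_3=g^2$ and writing the two remaining edges as $x=g^{-k}$ with $k\in\{2,3\}$ and $xg^{j}=g^{2}$ with $j\in\{0,1\}$---so the single identity $g^{j-k-2}=1$ with $j-k-2\le -3$ covers every orientation at once, whereas your orbit enumeration, which does close (the exponents that arise are $4$ and $5$), must be carried out case by case.
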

\begin{proof}
\begin{figure}[t]
\centering
\begin{tikzpicture}[x=\celldist, y=\celldist, node distance=0 cm,outer sep = 0pt]
\node[top] at (0,-1) {$1$};  \node[top] at (0,-2) {$\overline{g}$}; \node[top] at (0,-3) {$x$}; 
\node[top] at (1,0) {$1$};  \node[top] at (2,0) {$g$}; \node[top] at (3,0) {$g^2$};
\recblocks{3}{3}
\foreach \x/\y in {{(1,-1)/(2,-2)},{(2,-1)/(3,-2)}}
{  \draw[<->] \x -- \y {}; }
\node at (1,-2) {$A$}; \node at (3,-1) {$B$};

\end{tikzpicture}
\caption{A~$3\times 3$ matched rectangle that contains ${M_3^{\cyc}}$.}\label{fig:part:3:3:case}

\end{figure}

Suppose~$M$ is a non-degenerate partially matched~$3\times 3$ rectangle with four matched positions.
Since there are four matched edges but only three pairs of rows, two matchings must pair positions that lie in the same rows. Thus~$M$ contains~${M_3^{\cyc}}$ (see Figure~\ref{fig:part:3:3:case}). Suppose~$M$ is not degenerate. Positions~$A$ and~$B$ cannot be matched to each other and no positions in the third row can be matched to each other. Thus~$A$ and~$B$ are both matched to some position in the third row. Recall that by considering the core subgroup, we can assume that the generators corresponding to row 1 and column 1 respectively are the trivial group element. Since~$A$ must be matched to some position in row 3, we conclude that~$\overline{g} = g^i   x$, with~$i\in \{1,2\}$ thus~$x = g^{-k}$ with~$k\in \{2,3\}$. Since~$B$ is matched to row~3 we conclude that~$x  g^j = g^2$ with~$j\in \{0,1\}$. From~$g^{-k} g^j = g^2$ and~$j-k <2$ it follows that~$g$ is a torsion element. 
\end{proof}

\begin{lemma}\label{lem:4:times:4}
Every~$4\times 4$ matched rectangle~$M$ is degenerate.
\end{lemma}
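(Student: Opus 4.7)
The plan is a contradiction proof that combines Lemma~\ref{lem:3times3:no:4:matchings} with a simple double-counting argument on $3\times 3$ sub-rectangles. Assume $M$ is a non-degenerate $4\times 4$ matched rectangle. Since $M$ has $4\cdot 4 = 16$ positions, its underlying matching consists of $8$ edges.

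First I would observe two structural restrictions. If some edge matched two positions in a common row, say $(i,j)$ to $(i,j')$ with $j\neq j'$, then the defining relation $g_i h_j = g_i h_{j'}$ would force $h_j = h_{j'}$, making $M$ degenerate; analogously for edges within a single column. Hence for every edge the two matched positions lie in distinct rows and distinct columns. Next, for each choice of one row $r\in\{1,2,3,4\}$ and one column $c\in\{1,2,3,4\}$ to delete, the matching edges of $M$ whose both endpoints avoid row $r$ and column $c$ form a $3\times 3$ partially matched rectangle $M'$, and via the two inclusion maps $\phi,\phi'$ this $M'$ is a sub-rectangle of $M$ in the sense of Lemma~\ref{lem:degeneretedness:antihereditary}. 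By Lemma~\ref{lem:3times3:no:4:matchings}, if $M'$ contained two or more full edges it would be degenerate, whence $M$ would be degenerate by Lemma~\ref{lem:degeneretedness:antihereditary}. So each of the $4\cdot 4 = 16$ such $3\times 3$ sub-rectangles contains at most one full matching edge of $M$.

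Then I would finish with a double count of the incidences between edges and $3\times 3$ sub-rectangles. Fix an edge matching $(i,j)$ and $(i',j')$. By the first step $i\neq i'$ and $j\neq j'$, so this edge is contained in the $3\times 3$ sub-rectangle obtained by deleting row $r$ and column $c$ exactly when $r\in\{1,2,3,4\}\setminus\{i,i'\}$ and $c\in\{1,2,3,4\}\setminus\{j,j'\}$. That leaves $2\cdot 2=4$ sub-rectangles containing the edge. Summing over the $8$ edges yields $8\cdot 4 = 32$ edge-sub-rectangle incidences, whereas the upper bound from the previous paragraph gives at most $16\cdot 1 = 16$. This contradiction forces $M$ to be degenerate.

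The only step requiring real content is the bound that every $3\times 3$ sub-rectangle holds at most one full edge, and this has already been done in Lemma~\ref{lem:3times3:no:4:matchings} via a Baumslag--Solitar style core-subgroup analysis. Once that bound is in hand, the rest is purely combinatorial. I do not expect any genuine obstacle beyond being careful that the induced partial matching on a chosen $3\times 3$ block genuinely qualifies as a sub-rectangle as defined before Lemma~\ref{lem:degeneretedness:antihereditary}, which it does by taking $\phi$ and $\phi'$ to be the inclusions of the three retained rows and columns.
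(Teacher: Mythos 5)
Your argument hinges on reading Lemma~\ref{lem:3times3:no:4:matchings} as saying that a $3\times 3$ partially matched rectangle containing two or more matching edges is degenerate. That is not what the lemma says: ``four matched positions'' there means four matching \emph{edges} (eight matched cells), as its proof makes explicit (``there are four matched edges but only three pairs of rows'' --- a pigeonhole count that is vacuous for two edges), and as the paper's own use of the lemma confirms. The two-edge version you invoke is in fact false. Consider the $3\times 3$ partially matched rectangle whose only edges are $\{(1,1),(2,2)\}$ and $\{(2,1),(3,2)\}$ --- exactly the configuration ${M_3^{\cyc}}$ that the paper repeatedly uses as a legitimate, potentially non-degenerate starting configuration. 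Its associated group is obtained from the free group on $g_1,g_2,h_1,h_3$ by setting $h_2 = g_2^{-1}g_1h_1$ and $g_3 = g_2g_1^{-1}g_2$; this group is free of rank $4$, hence torsion-free and equal to its universal torsion-free image, and all six generators are pairwise distinct in it. So a $3\times 3$ sub-rectangle with two edges need not be degenerate, and the key bound in your proof (``at most one full edge per $3\times 3$ sub-rectangle'') is unjustified.

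With the lemma in its correct form, each of your sixteen $3\times 3$ sub-rectangles can contain up to \emph{three} edges of $M$ without forcing degeneracy, and your double count then reads $8\cdot 4 = 32 \leq 16\cdot 3 = 48$: no contradiction. The failure is not a matter of adjusting constants --- the incidence count is simply consistent with the true statement, so no purely numerical argument of this shape can work. The paper instead argues by structure: the pigeonhole step ($8$ edges versus $\binom{4}{2}=6$ pairs of rows) produces two edges joining the same pair of rows; if these two edges sit in ``parallel'' position one gets the factorization of Figure~\ref{fig:4:4:case:3} and a contradiction via Theorem~\ref{thm:length:2}, and otherwise one has a copy of ${M_3^{\cyc}}$ and a case analysis of how the remaining edges can attach, using Lemma~\ref{lem:exhaustive:means:degenerate}, Lemma~\ref{lem:3times3:no:4:matchings} (in its four-edge form), and the forced configuration of Figure~\ref{fig:4:4:case:2}. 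Your row/column-deletion observation and the verification that such blocks are sub-rectangles in the sense of Lemma~\ref{lem:degeneretedness:antihereditary} are fine, but they cannot replace that case analysis.
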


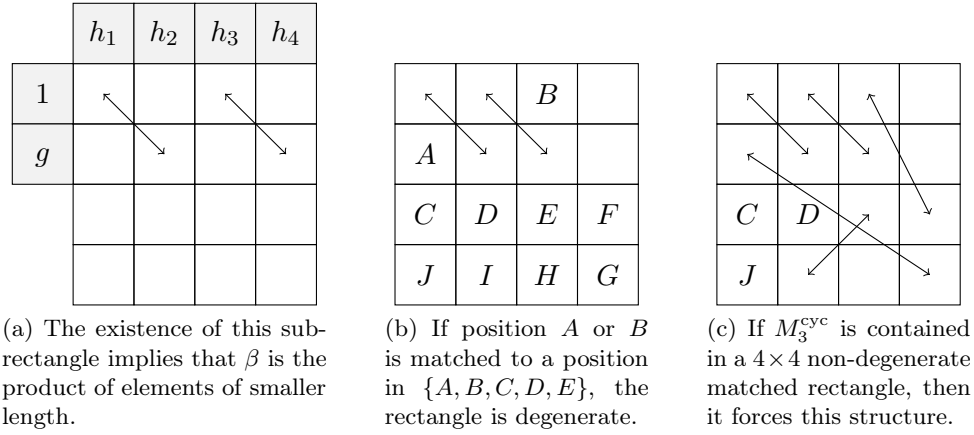
\begin{figure}[th]
\centering
 \subfloat[The existence of this sub-rectangle implies that~$\beta$ is the product of elements of smaller length.]{
\begin{tikzpicture}[x=\celldist, y=\celldist, node distance=0 cm,outer sep = 0pt]
\recblocks{4}{4}
\glabels{4}
\node[top] at (0,-1) {$1$};
\node[top] at (0,-2) {$g$};

\foreach \x/\y in { {(1,-1)/(2,-2)}, {(3,-1)/(4,-2)}}
{
      \draw[<->] \x -- \y {};
}
\label{fig:4:4:case:3}
\end{tikzpicture}
}\quad \quad
 \subfloat[If position~$A$ or~$B$ is matched to a position in~$\{A,B,C,D,E\}$, the rectangle is degenerate.]{
\begin{tikzpicture}[x=\celldist, y=\celldist, node distance=0 cm,outer sep = 0pt]
\recblocks{4}{4}
\foreach \x/\y in { {(1,-1)/(2,-2)},  {(2,-1)/(3,-2)}}
{
      \draw[<->] \x -- \y {};
}
\node at (1,-2) {$A$};
\node at (3,-1) {$B$};
\node at (4,-3) {$F$};
\node at (4,-4) {$G$};
\node at (3,-3) {$E$};
\node at (1,-4) {$J$};
\node at (2,-4) {$I$};
\node at (3,-4) {$H$};
\node at (2,-3) {$D$};
\node at (1,-3) {$C$};
\end{tikzpicture}
\label{fig:4:4:case:1}
}\qquad 
  \subfloat[If~$M_3^{\cyc}$ is contained in a~$4\times 4$ non-degenerate matched rectangle, then it forces this structure.]{
 \begin{tikzpicture}[x=\celldist, y=\celldist, node distance=0 cm,outer sep = 0pt]
 \recblocks{4}{4}
 \foreach \x/\y in { {(1,-1)/(2,-2)},  {(2,-1)/(3,-2)}, {(3,-1)/(4,-3)},{(1,-2)/(4,-4)},{(3,-3)/(2,-4)}}
 {
       \draw[<->] \x -- \y {};
 }
 
 \node at (1,-4) {$J$};
 \node at (2,-3) {$D$};
 \node at (1,-3) {$C$};
 \node at (4,-1) {};

 \end{tikzpicture}
 \label{fig:4:4:case:2}
 } %
\caption{Several types of~$4\times 4$ matched rectangles.}\label{fig:proof:4:4}
\end{figure}

\begin{proof}

Let~$M$ be a non-degenerate~$4\times 4$ matched rectangle.
Suppose first~$M$ contains the partially matched rectangle shown in Figure~\ref{fig:4:4:case:3}. Looking at the core subgroup, we see that~$\beta = h_1+h_2+h_2+h_3 = (h_1+h_3) (1+g^{-1})$. Thus~$\alpha (h_1+h_3) (1+g^{-1}) = 0$, which we conclude to be false by applying Theorem~\ref{thm:length:2} twice. By symmetry~$M$ cannot contain the transpose of the partially matched rectangle shown in Figure~\ref{fig:4:4:case:3} either.

 In~$M$ there are 8 matched pairs of positions. Since there are only~$\binom{4}{2}= 6$ ways to choose two rows, by the pigeonhole principle, there are two matching edges that match positions in the same row. We conclude that~$M$ contains~${M_3^{\cyc}}$.
 
By symmetry, we may assume that~$M$ contains the partially matched rectangle depicted in Figure~\ref{fig:4:4:case:1}. 
If~$A$ is matched to~$B$, then~$M$ is degenerate.
By Lemma~\ref{fig:part:3:3:case} we know that not both positions~$A$ and~$B$ are matched to a position in~$\{C,D,E\}$. If one of the positions~$A$ and~$B$ is matched to a position in~$\{C,D,E\}$, then at least one of the two other positions in~$\{C,D,E\}$ is not matched to~$G$, which implies that the cyclic closure of~$(1,1)$ spans all columns or all rows and thus that~$M$ is degenerate (Lemma~\ref{lem:exhaustive:means:degenerate}). By swapping rows 3 and 4 and repeating the argument, we conclude that~$A$ and~$B$ are matched to a position in~$\{F,G\}$. Thus w.l.o.g., we may assume that~$A$ is matched to~$F$ and~$B$ is matched to~$G$. 
Under this assumption, position~$E$ must be matched to~$J$ or to~$I$.
If~$E$ were matched to~$J$, then by trying every matching for~$H$ we conclude, using the observation from Figure~\ref{fig:4:4:case:1} and the fact that~$M$ does not contain proper sub-rectangles, that~$H$ cannot be matched to any other position. Thus~$E$ is matched to~$I$ but then, as shown in Figure~\ref{fig:4:4:case:2},~$M$ contains the transpose of the rectangle depicted in Figure~\ref{fig:4:4:case:3} and is thus degenerate.
\end{proof}

We have now shown all the parts of Theorem~\ref{thm:by:hand}.

\begin{proof}[Proof of Theorem~\ref{thm:by:hand}]
If~$\alpha \beta = 0$, as in the assumption of Theorem~\ref{thm:by:hand}, then by Theorem~\ref{thm:length:2} one of the lengths of~$\alpha$ and~$\beta$ must be at least 3.
Lemma~\ref{lem:Q:to:FP} shows that it is sufficient to prove the statement for~$\mathbb{F}_2$ instead of~$\mathbb{Q}$. Corollary~\ref{cor:3:times:4} and Lemmas~\ref{lem:3:times:6} and~\ref{lem:4:times:4} show exactly this.
\end{proof}

The techniques used in the proof of Theorem~\ref{thm:by:hand} foreshadow that generalizing the method to greater length combinations will result in an excessively extensive case distinction. Therefore a reasonable course of action is to proceed with a computer-assisted approach.

\section{Canonical labeling of matched rectangles}\label{sec:canonical:labeling}

Our intention is to design now an efficient algorithm for the problem of enumerating non-degenerate matched rectangles. To this end, we define first the notion of a canonical labeling for these objects. The concept of a canonical labeling can be defined in a very general context, which makes it applicable to various combinatorial objects. We refer the reader to~\cite{McKay:userguide} and~\cite{canonicalMcKay} to see how canonical labelings are used in practice for isomorph-free exhaustive generation of graphs. In this paper we only require canonical labelings for matched rectangles without proper sub-rectangles and thus state the definition directly for these. Before we do this, we first define the concept of a canonical form.

\begin{definition}
Let~$\mathcal{M}$ be the set of matched rectangles that do not contain proper sub-rectangles.
A \emph{canonical form} of the rectangles in~$\mathcal{M}$ is a map~$C  \colon \mathcal{M} \rightarrow \mathcal{M}$ such that for all rectangles~$M,M'\in \mathcal{M}$ we have 
\begin{itemize}
\item $C(M)\cong M$, 
\item $C(M) = C(M')$ if and only if~$M\cong M'$.
\end{itemize}
\end{definition}

Thus, a canonical form assigns to every rectangle~$M$ in~$\mathcal{M}$ a canonical representative~$C(M)$ isomorphic to~$M$ such that two rectangles 
have the same representative if and only if they are isomorphic.

Intuitively, a canonical labeling is a map that describes how to manipulate a matched rectangle~$M$ to obtain the canonical form~$C(M)$. 
To make this statement more precise we need some definitions. Given an~$n\times m$ matched rectangle~$M$, an~$n\times n$ permutation matrix~$S_r$ and an~$m\times m$ permutation matrix~$S_c$, we define~$S_r M S_c$ to be the matched rectangle obtained in the usual way by permuting the rows and columns according to~$S_r$ and~$S_c$ respectively. 

\begin{definition}
A \emph{canonical labeling} of the rectangles in~$\mathcal{M}$ is a map that assigns every matched rectangle~$M$ two permutation matrices~$S_r(M)$ and~$S_c(M)$ such that the map~$C \colon \mathcal{M} \rightarrow \mathcal{M}$ given by~$C(M) := S_r(M) M S_c(M)$ is a canonical form.
\end{definition}

With a different viewpoint, one can see that a canonical labeling is a map that labels the rows and columns of a matched rectangle in a way that is consistent across isomorphic matched rectangles.

In the rest of this section we will prove that for matched rectangles without proper sub-rectangles a canonical labeling can be computed in time~$\BigO(n^2m^2)$. Before we give a rigorous algorithm, we describe the intuition. Suppose we guess the row~$i$ and the column~$j$, which respectively correspond to the first row and column of the canonical isomorph of a matched rectangle~$M$. Position~$(i,j)$ is matched to some other position~$(i',j')$. The corresponding row~$i'$ and the corresponding column~$j'$ will be column 2 and row 2 of the canonical form. Now we do the following repeatedly. Among the rows and columns to which we have already assigned a number, we choose some canonical position, which has a matched partner in a row or in a column without assigned number. If the partner gives us a new row, this row is assigned the smallest unused number among the rows. Similarly, if we obtain a new column, that column is assigned the smallest unused number among the columns. 

To describe the procedure in greater detail, we need to fix some notation. For a list~$L$ we denote by~$L[a]$ entry number~$a$. By~$|L|$ we denote the number of elements in~$L$. If a list of length~$k$ contains all integers in~$\{1,\ldots,k\}$ it can also be seen as a permutation. Abusing notation, we identify such lists and their corresponding permutation matrices.

For a position~$(i,j)$ in a matched rectangle~$M$ we define~$\match_M(i,j)$ to be the position matched to~$(i,j)$. 
We require some linear order on the set of all matched rectangles in~$\mathcal{M}$ that can be computed quickly. We can, for example, use the lexicographic ordering~$<_{\lex}$, which defines~$M <_{\lex} M'$ to hold if~\[(\match_M(1,1),\match_M(1,2),\ldots,\match_M(1,m),\match_M(2,1),\ldots,\match_M(n,m))\] is lexicographically smaller than~\[(\match_{M'}(1,1),\match_{M'}(1,2),\ldots,\match_{M'}(1,m),\match_{M'}(2,1),\ldots,\match_{M'}(n,m))\] in the usual sense. However, we could also use any other efficiently computable linear order.

\paragraph{Description of Algorithm~\ref{algo}:} Algorithm~\ref{algo} receives as input a matched rectangle~$M$. It first initializes two lists~$S_r$ and~$S_c$ to be empty lists. During the execution of the algorithm, the algorithm stores in these lists the row and column permutations that give the relabeling of~$M$ that is smallest with respect to~$<_{\lex}$ among all permutations generated by the algorithm so far. For every position~$(i,j)$ the algorithm does the following. It initializes the lists~$L_r$ and~$L_c$ to~$(i)$ and~$(j)$ respectively. While~$L_r$ does not contain all rows or~$L_c$ does not contain all columns, the algorithm finds a position which is matched to a position~$(k,\ell)$ in a row not in~$L_r$ or a column not in~$L_c$. Here, with respect to some ordering induced by~$L_r$ and~$L_c$, the position that is chosen is the least position with such a matched partner. These matched partners are maintained in the ordered list~$N$. The algorithm appends, if not already contained,~$k$ and~$\ell$ to the end of~$L_r$ and~$L_c$ respectively. This procedure creates two permutations~$L_c$ and~$L_r$, which are stored in~$S_r$ and~$S_c$, whenever they give a permuted rectangle that is smaller with respect to~$<_{\lex}$ than all previous rectangles the algorithm has produced.

\begin{algorithm}[t]
\caption{Canonical labeling of matched rectangles without proper sub-rectangle}
\label{algo}
\begin{algorithmic}[1]
\REQUIRE A matched rectangle~$M$.
\ENSURE  A row permutation~$S_r$ and a column permutation~$S_c$ such that~$S_r M S_c$ is the canonical representative of~$M$.
\ENSUREGAP

\STATE $S_r \leftarrow ()$
\STATE $S_c \leftarrow ()$
\FORALL{$(i,j)\in \{1,\ldots,n\} \times \{1,\ldots,m\}$}
\STATE  $L_r \leftarrow (i)$ 
\STATE  $L_c \leftarrow (j)$ 
\STATE  $N \leftarrow ((i,j))$ 
		\WHILE{$L_r$ does not have length~$n$ or $L_c$ does not have length~$m$}
			\STATE $(k,\ell) \leftarrow \match( N[1])$ \label{access:N:1}
			\STATE pop~$N[1]$ from the list~$N$
			\IF {$k\notin L_r$}
			    \STATE append~$k$ to the end of~$L_r$
			    \STATE append~$(k,L_c[1]),(k,L_c[2]),\ldots, (k,L_c[|L_c|])$ to the end of~$N$ 
			\ENDIF
			\IF {$\ell\notin L_c$}
				\STATE append~$\ell$ to the end of~$L_c$
			    \STATE append~$(L_r[1],\ell),(L_r[2],\ell),\ldots, (L_r[|L_r|],\ell)$ to the end of~$N$ 
			\ENDIF

		\ENDWHILE
		\IF {$S_r = S_c = ()$ or $L_r M L_c <_{\lex}  S_r M S_c $}
			\STATE $S_r \leftarrow L_r$ 
			\STATE $S_c \leftarrow L_c$
		\ENDIF
\ENDFOR
\RETURN $(S_r,S_c)$
\end{algorithmic}
\end{algorithm}

\begin{theorem}\label{thm:canonical:for:recs}
Algorithm~\ref{algo} computes a canonical labeling of the rectangles in~$\mathcal{M}$ and runs, for an~$n\times m$ rectangle, in time~$\BigO(n^2m^2)$.
\end{theorem}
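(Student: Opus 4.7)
My plan is to prove the theorem by separating correctness (the output is a canonical form) from the runtime estimate. For correctness I would split the argument into two sub-claims: (i) for every starting pair $(i,j)$ the while loop terminates with $L_r$ of length~$n$ and~$L_c$ of length~$m$, using the hypothesis that $M$ has no proper sub-rectangle; (ii) the rectangle $S_r M S_c$ returned by the algorithm depends only on the isomorphism type of~$M$.

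For~(i), I would track the invariant that, at any moment during the while loop, every pair $(r,c)\in L_r\times L_c$ has either already been popped from~$N$ or still lies in~$N$. This follows directly from the two append rules: whenever a new row~$k$ is appended, the positions $(k,L_c[1]),\ldots,(k,L_c[|L_c|])$ are enqueued, and symmetrically for new columns, so closure is maintained. When the loop stops because $N$ has emptied, every position in $L_r\times L_c$ has been processed, and its matched partner $(k,\ell)$ necessarily satisfies $k\in L_r$ and $\ell\in L_c$ (otherwise $L_r$ or $L_c$ would have grown). Consequently the restriction of the matching to $L_r\times L_c$ is itself a perfect matching, i.e.\ a sub-rectangle of~$M$. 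Since $M$ has no proper sub-rectangle, this forces $|L_r|=n$ and $|L_c|=m$.

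For~(ii), I would verify by induction on the loop step that the algorithm is equivariant under isomorphism: if permutations $(\sigma,\tau)$ turn $M$ into $M'$, then running Algorithm~\ref{algo} from $(i,j)$ on $M$ and from $(\sigma(i),\tau(j))$ on $M'$ produces corresponding lists whose action yields the same permuted rectangle, i.e.\ $L_r M L_c = L_r' M' L_c'$. Because the outer loop sweeps over all starting positions on both sides, the two executions build the same set of candidate rectangles, and taking the $<_\lex$-minimum is therefore independent of the representative. Combined with the obvious fact $S_r M S_c\cong M$, this establishes both defining properties of a canonical form, via the composition $C(M):=S_r(M)\, M\, S_c(M)$.

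For the runtime, I would argue that within one iteration of the outer loop each position of $M$ is enqueued into $N$ at most once, so the body of the while loop executes $\BigO(nm)$ times; each execution costs constant time if membership in $L_r$ and $L_c$ is tested via auxiliary indicator arrays that are reset between outer iterations. The lex comparison $L_r M L_c <_{\lex} S_r M S_c$ scans $\BigO(nm)$ matched pairs, and can be aborted at the first difference. Thus one outer iteration costs $\BigO(nm)$, and the total is $\BigO(n^2m^2)$. The main obstacle is sub-claim~(i): since the two append steps are guarded by independent membership tests and can be interleaved arbitrarily, one must argue carefully that no pair of the final $L_r\times L_c$ slips past the queue before $N$ runs dry; once this bookkeeping is in place, the remaining arguments are essentially routine.
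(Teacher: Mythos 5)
Your proposal is correct and takes essentially the same route as the paper: correctness via the isomorphism-equivariance of the construction of $L_r$ and $L_c$ (so the $<_{\lex}$-minimum over all starting positions is isomorphism invariant), and the runtime bound via the observation that each position is enqueued in $N$ at most once per outer iteration, giving $\BigO(nm)$ per iteration and $\BigO(n^2m^2)$ in total. Your sub-claim (i) --- that the while loop terminates with complete lists because an emptied queue would otherwise exhibit a proper matched sub-rectangle --- is a point the paper's proof leaves implicit, and making it explicit is a sound refinement of, rather than a departure from, the paper's argument.
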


\begin{proof}

\emph{(Correctness).} We first prove that the output of Algorithm~\ref{algo} is indeed a canonical labeling. To show this we need to prove the following.
Let~$M$ and~$M'$ be two isomorphic matched rectangles and let~$(S^M_r,S^M_c)$ and~$(S^{M'}_r,S^{M'}_c)$ be the respective outputs of Algorithm~\ref{algo}, then~$ S^{M}_r(M) M S^{M}_c(M) =  S^{M'}_r(M') M' S^{M'}_c(M')$. To show this, it suffices to observe that the generation of the lists~$L_r$ and~$L_c$ is isomorphism invariant.
More precisely, let~$(\phi,\phi')$ be an isomorphism from~$M$ to~$M'$. For any list~$L$ let~$\phi(L)$ and~$\phi'(L)$ be the list obtained by applying~$\phi$ and~$\phi'$ to all entries of~$L$ respectively. If we compare the execution of the algorithm on rectangle~$M$ during the for-loop of some tuple~$(i,j)$ with
the execution of the algorithm on rectangle~$M$ during the for-loop of the tuple~$(\phi(i),\phi'(j))$, then any list~$L_r$ produced by the former will lead to the list~$\phi(L_r)$ produced by the latter. Likewise any list~$L_c$ produced by the former will lead to the list~$\phi(L_c)$ produced by the latter. The reason for this is that~$N$ is isomorphism invariant as well since it is determined by~$L_r$ and~$L_c$. 
For a tuple of integers~$(i,j)$, let~$L_r^M(i,j)$ and~$L_c^M(i,j)$ be the lists computed in the iteration of the for-loop that corresponds to the tuple~$(i,j)$ when~$M$ is given to the algorithm. We have thus shown for all~$(i,j)\in \{1,\ldots,n\}\times \{1,\ldots,m\}$  that~$L^M_r(i,j) M L^M_c(i,j) = L^{M'}_r(\phi(i),\phi'(j)) M' L^{M'}_c(\phi(i),\phi'(j))$. Thus~$ S^M_r(M) M S^M_c(M) =  S^{M'}_r(M') M' S^{M'}_c(M')$.

\emph{(Running time).} We now analyze the running time of the algorithm. Since comparison with respect to~$ <_{\lex}$ can be performed in~$\BigO(nm)$, it suffices to show that each iteration of the for-loop can be computed in time~$\BigO(n m)$. To show this, we observe first that in each iteration one element is removed from the list~$N$. Furthermore, since the lists~$L_r$ and~$L_c$ are monotonically increasing, all elements added to~$N$ are distinct. Since there are only~$nm$ elements in~$ \{1,\ldots,n\}\times \{1,\ldots,m\}$, the total amortized time for manipulations performed on~$N$ is in~$\BigO(nm)$. The product~$L_r M L_c$ can be computed in time~$\BigO(nm)$ as well. The other operations within the for-loop are constant time operations, since we may
assume that inverse lookup is feasible in constant time. This can be assumed since the maximal length of the lists~$L_r$ and~$L_c$ and the maximum size of any entry is~$n$ and~$m$ respectively.
\end{proof}

Theorem~\ref{thm:canonical:for:recs} stands in contrast to the fact that no polynomial time algorithm computing a canonical labeling for graphs is known. As McKay~\cite{personalMcKay} notices, the fact that matched rectangles without proper sub-rectangles can be canonized by the method described implies that their automorphism group has size at most~$nm$. Moreover, it gives as a side-effect a list of all automorphisms. This is due to the fact that once a position~$(i,j)$ has been chosen, all other positions are determined.

The requirement that the matched rectangle given to the algorithm does not have proper matched sub-rectangles is essential. Indeed, the isomorphism problem of general matched rectangles is graph isomorphism complete. A polynomial time reduction from the graph isomorphism problem to the isomorphism problem of matched rectangles can be obtained as follows. Given a graph~$G$ with vertex set~$V= \{1,\ldots,n\}$ and edge set~$E$ we define a matched~$n\times 2 |E|$ rectangle. We use two columns~$e_1$ and~$e_2$ to encode each edge~$e$. If~$e$ has end vertices~$i$ and~$j$, then positions~$(i,e_1)$ and~$(j,e_1)$ are matched to each other and positions~$(i,e_2)$ and~$(j,e_2)$ are matched to each other. For all~$k \in\{1,\ldots,n\}\setminus \{i,j\}$ we match positions~$(k,e_1)$ and~$(k,e_2)$. Two rectangles obtained in this way are isomorphic if and only if the original graphs are isomorphic. Since the construction can be performed in polynomial time this shows the claimed graph isomorphism completeness.

It is possible to adapt our canonical labeling algorithm to work also with all partially matched rectangles of a specific kind. 
In fact we will now use the algorithm to define a class of partially matched rectangles that behaves favorably with respect to the algorithm. We say a partially matched rectangle~$M$ is~\emph{adequate} if there exists a position~$(i,j)$ such that when executing the algorithm on rectangle~$M$, the for-loop corresponding to position~$(i,j)$ will create lists~$L_r$ and~$L_c$ that contain all matched positions in~$M$ before accessing a position in Line~\ref{access:N:1} as~$N[1]$ that is not matched. The definition is tailored so that Algorithm~\ref{algo} can be applied to find a canonical labeling of adequate partially matched rectangles. Furthermore, for every non-trivial adequate matched rectangle in canonical form, there is exactly one matching edge~$e$ such that removal of~$e$ gives a non-trivial adequate matched rectangle in canonical form. This edge~$e$ is the matching edge that is accessed last when the for-loop corresponding to~$(1,1)$ is executed. We call the edge~$e$ the \emph{canonical edge}. We extend the definition of a canonical edge to all adequate partially matched rectangle~$M$ by considering the pre-image of the isomorphism to the canonical form. If~$M'$ is obtained from~$M$ by deleting the canonical edge~$e$, then we call~$M'$ the \emph{canonical parent of~$M$}.

\section{Computational results}\label{sec:by:computer}

The canonical labeling algorithm from the previous section may be used to enumerate all degenerate rectangles that do not contain proper sub-rectangles.
We use the method of the canonical construction path by McKay~\cite{canonicalMcKay}. 
On a high level, the method allows us to create all matched rectangles without proper sub-rectangles by repeatedly forming canonical extensions of adequate rectangles. 

We describe the procedure in more detail: To compute all~$n\times m$ matched rectangles without proper sub-rectangles, starting with the empty~$n\times m$  rectangle, we recursively compute for a matched rectangle~$M'$ one representative of each  isomorphism class of adequate rectangles~$M$ of which~$M'$ is the canonical parent.
As argued in the previous section, the particular canonical labeling we have defined has the property that in every adequate partially matched rectangle~$M$ there is exactly one matching edge~$e$, such that deletion of~$e$ results in a partially matched proper sub-rectangle that is the canonical parent of~$M$. In order to avoid generation of isomorphic matched rectangles from the same parent~$M'$, it suffices for us to require the following. If~$M$ has a row with a matched position such that all positions in this row are unmatched in~$M'$, then this row is the smallest row in~$M'$ without matched positions. Similarly if~$M$ has a column with a matched position such that all positions in this column are unmatched in~$M'$, then this column is the smallest row in~$M'$ without matched positions. By the general theory underlying canonical construction paths, we will only generate exactly one rectangle from each isomorphism class of adequate partially matched rectangles. The canonical labeling defined for adequate rectangles has the property that the canonical parent of an adequate rectangle in canonical form is also in canonical form. Due to this property, the algorithm is in particular an orderly algorithm (see~\cite{canonicalMcKay}).

Since rectangles inherit degeneracy from their sub-rectangles (Lemma~\ref{lem:degeneretedness:antihereditary}), we can prune any partially matched rectangle for which we can determine degeneracy.
In our implementation, instead of determining degeneracy in general, we employ mainly checks for degeneracy due to the following specific reasons.

\paragraph{Obvious cyclic generation and resulting torsion:} Given a position~$p$ we can compute the cyclic closure of~$p$. All generators corresponding to rows and columns in this cyclic closure are in a cyclic subgroup. By analyzing the matchings of two positions in the cyclic closure it may be possible to find a torsion generator. This implies degeneracy. Lemma~\ref{lem:3times3:no:4:matchings} is an example of such a conclusion.

\paragraph{}We say a sequence of positions~$(i_1,j_1), (i'_1,j'_1), (i_2,j_2), (i'_2,j'_2),\ldots,(i_k,j_k), (i'_k,j'_k)$ is a matching sequence if for all~$t\in \{1,\ldots,k\}$ position~$(i_t,j_t)$ is matched to~$(i'_t,j'_t)$ and for all~$t\in \{1,\ldots,k-1\}$ we have~$j'_t = j_{t+1}$ and~$i'_t \neq  i_{t+1}$ .

\paragraph{Periodic cycle:} Suppose~$(i_1,j_1), (i'_1,j'_1),\ldots,(i_k,j_k), (i'_k,j'_k)$ is an induced matching cycle in a partially matched rectangle, i.e, the sequence is a matching sequence that additionally fulfills~$j'_k = j_1$ and for which all~$i_t$ are distinct. If there is a period~$0<p<k$ such that~$i_t = i_{t+p}$ and~$i'_t = i'_{t+p}$ for all~$t\in \{1,\ldots,k\}$ taking indices modulo~$k$, the rectangle is degenerate. Figure~\ref{fig:3:4:cycle:subcase:5} shows an example of a partially matched rectangle with this property. The period is two.

\paragraph{}Suppose for~$K\in\{1,2\}$ the sequences~$(i^K_1,j^K_1), ({i'}^K_1,{j'}^K_1),\ldots,(i^K_k,j^K_k), ({i'}^K_k,{j'}^K_k)$ are matching sequences such that~$i^1_t = i^2_t$ and~${i'_t}^1 = {i'_t}^2$ for all~$t\in\{1,\ldots,k\}$, then we say the matching sequences are parallel.

\paragraph{Mismatching parallel sequences:} 
If in a partially matched rectangle for two parallel matching sequences as above we have~${j'}^1_k = {j}^1_1$ but~${j'}^2_k \neq {j}^2_1$, then the partially matched rectangle is degenerate  since the generators corresponding to columns~${j'}^2_k$ and~${j}^2_1$ are identical.

\paragraph{}There is a situation, in which we can determine ahead of time that we will not have to consider extensions of our current partially matched rectangle, despite the partially matched rectangle not being degenerate.

\paragraph{The core subgroup is a factor group of a cyclic group or a solvable Baumslag-Solitar group:} Suppose for the partially matched rectangle~$M$, by considering the core subgroup and by using existing relations, we can determine that the core subgroup is generated by a single element. Since the Zero Divisor Conjecture holds for~$\mathbb{Z}$, any matched rectangle~$M'$ which contains~$M$ and has the same dimensions, must be degenerate. Similarly, if the canonical factor of~$M$ is a solvable Baumslag-Solitar group, then any matched rectangle~$M'$ which contains~$M$ and has the same dimensions, must be degenerate. This line of reasoning has already been applied in Lemma~\ref{lem:3:times:6}. As part of its proof, Figure~\ref{fig:3:4:cycle:subcase:1} shows an example involving a cyclic group and Figures~\ref{fig:3:4:cycle:subcase:2} and~\ref{fig:3:4:cycle:subcase:4} are examples involving Baumslag-Solitar groups.

\paragraph{}Instead of analyzing the degeneracy for all partially matched rectangles, we only apply various pruning rules. For those rectangles where the pruning rules are not sufficient to prove degeneracy, we employ the computer algebra system GAP~\cite{GAP4} due to its capability of handling finitely presented groups to eliminate them. For the different length combinations, the table in Figure~\ref{fig:exps} shows the running time of an implementation of the described algorithm and the number of unpruned rectangles that were resolved using GAP.

\begin{figure}[thb]
\begin{center}
\renewcommand{\arraystretch}{1.6}
    \begin{tabular}[t]{||c||r|r|r|r|r|r|r||}
     \hhline{|t:=:t:=t=t:=t:=t:=t:=t:=:t|}
    \multicolumn{1}{||c||}{running time} &
	\multicolumn{1}{c|}{4}
    & \multicolumn{1}{c|}{6}
    & \multicolumn{1}{c|}{8}
    & \multicolumn{1}{c|}{10}
    & \multicolumn{1}{c|}{12}
    & \multicolumn{1}{c|}{14}
    & \multicolumn{1}{c||}{16} \\
     \hhline{|:=::=======:|}
    3
    
    & $<1$s & $<1$s & $<1$s & 4s & 34s & 1877s & 111657s\\
    \hhline{||-||-------||}
    4 
    &  $<1s$  & 436s &  \multicolumn{5}{c||}{}\\
    \hhline{||-||--~~~~~||}
    
    5 
    &  17s  & \multicolumn{6}{c||}{} \\
    \hhline{||-||-~~~~~~||}
    7
    &  17570s & \multicolumn{6}{c||}{} \\
        
    \hhline{|b:=:b:======b:=:b|}
  
\multicolumn{6}{c}{}\\
     \hhline{|t:=:t:=t=t:=t:=t:=t:=t:=:t|}
    \multicolumn{1}{||c||}{unpruned rectangles} &
	   \multicolumn{1}{c|}{4}
        & \multicolumn{1}{c|}{6}
        & \multicolumn{1}{c|}{8}
        & \multicolumn{1}{c|}{10}
        & \multicolumn{1}{c|}{12}
        & \multicolumn{1}{c|}{14}
        & \multicolumn{1}{c||}{16} \\
         \hhline{|:=::=======:|}
        3
        
        & 0 & 0  & 0 & 2 & 7 & 305 & 4068\\
        \hhline{||-||-------||}
        4 
        &  3  & 585 & \multicolumn{5}{c||}{}\\
        \hhline{||-||--~~~~~||}
        
        5     &   19 & \multicolumn{6}{c||}{} \\
         \hhline{||-||-~~~~~~||}
        7 & 16715& \multicolumn{6}{c||}{} \\
        \hhline{|b:=:b:======b:=:b|}
  \end{tabular} 

  \caption[Running times and number of unpruned rectangles]{The table shows the total running times of the entire computation for various length combinations in seconds (top) and the corresponding numbers of unpruned matched rectangles, which were passed on to GAP (bottom). All computations were performed on 2.40GHz Intel Xeon E5620 cores.}
  \label{fig:exps}
  \end{center}
\end{figure}

Using the fact that the execution of the algorithm, which includes the checks performed by GAP, shows all rectangles of the various length combinations are degenerate, we can prove Theorem~\ref{thm:by:computer}.

\begin{proof}[Proof of Theorem~\ref{thm:by:computer}]
If~$\alpha \beta = 0$,  then by Theorem~\ref{thm:length:2} one of the lengths of~$\alpha$ and~$\beta$ must be at least 3.
The execution of the algorithm has shown that, for all other length combinations mentioned in the theorem, all matched rectangles are degenerate, which proves the theorem.
\end{proof}

\section{Summary and conclusion}\label{sec:conclusion}

We described a class of presentations of groups, and the corresponding combinatorial objects called matched rectangles, which are universal for the existence of a counterexample to the Zero Divisor Conjecture over~$\mathbb{F}_2$. We have designed an algorithmic method to rule out systematically counterexamples to the Zero Divisor Conjecture among products of elements of small length. The results imply the non-existence of such examples for the field~$\mathbb{Q}$. We remark that it is known that if the group ring~$R[G]$ over an integral domain~$R$ contains a non-trivial zero divisor, then it also contains a non-trivial element whose square is zero (see~\cite{Passman}). It is thus sufficient to check the conjecture only for length combinations for which~$\length(\alpha) = \length(\beta)$. However, in the construction that, given a zero divisor produces an element of square zero, it is not clear how the length changes.

Concerning the algorithmic approach, in some sense, rather than excluding certain length combinations, it is more appealing to hope that the entire procedure may produce a counterexample to the conjecture. However, even if such a counterexample in the form of a non-degenerate rectangle is produced, it is not clear how to prove then that the particular rectangle, or equivalently its associated group, is not degenerate. In particular, there is a general result by Adyan and Rabin~\cite{MR0103217,MR0110743}, which implies that given a finitely presented group, the problem of torsion-freeness is undecidable.
With respect to undecidability, Grabowski~\cite{grabowski} recently showed for a specific group, that the problem of determining whether a given element is a zero divisor is undecidable.

\section{Acknowledgments}

I am grateful for the helpful comments and the nifty suggestions I received from Brendan McKay and Matasha McConchie.

\bibliographystyle{abbrv}

\bibliography{zero_divisors}
 
\end{document}